\documentclass[11pt]{amsart}

\usepackage{amsmath, amsthm, amssymb}
\usepackage{graphicx, array}
\usepackage{longtable}
\usepackage{epstopdf}
\usepackage{mathrsfs}
 \usepackage{setspace}

\theoremstyle{definition}

\newtheorem{theorem}{Theorem}[section]
\newtheorem{proposition}{Proposition}
\newtheorem{lemma}{Lemma} 
\newtheorem{corollary}{Corollary}

\newtheorem{remark}{Remark}

\title{The word problem for a family of one relation Adian inverse semigroups}

\author{Muhammad Inam}
\email{minam@saumag.edu}

\address{Department of Mathematics and Computer Science,\\
Southern Arkansas University\\
Magnolia, AR 71753 USA}

\begin{document}
\date{\today}

\begin{abstract}  The word problem for an Adian inverse semigroup given by the presentation $Inv\langle a,b|a=ba^nb\rangle$, where $n\geq 1$, is decidable.

\end{abstract}

\maketitle

\section{Introduction}

\bigskip

A semigroup $S$ is called an \textit{inverse semigroup} if for every element $a$ of $S$ there exists a unique element $b$ in $S$ such that $b=bab$ and $a=aba$. This unique element $b$ of $S$ is called the \textit{inverse} of $a$, and is denoted by $a^{-1}$. In an inverse semigroup idempotents commute and product of two idempotents is an idempotent. The \textit{natural partial order} on an inverse semigroup $S$ is defined as, for $a,b\in S$,  $a\leq b$ if and only if $a=aa^{-1}b$. A congruence relation $\sigma$ is defined on $S$ as, for $a,b\in S$, $a\sigma b$ if and only if there exists an element $c\in S$ such that $c\leq a,b$. The congruence $\sigma$ turns out to be the minimum group congruence on $S$. So, $S/\sigma$ is the maximal group homomorphic image of $S$. An inverse semigroup $S$ is \textit{$E$-unitary}, if the natural homomorphism from $S$ onto $G=S/\sigma$ is \textit{idempotent pure}, i.e., the inverse image of the identity of $G$ is the set of all idempotents of $S$.   All these facts along with other fundament details of the theory of inverse semigroups can be found in the text \cite{IS}.  

In this paper, $X$ denotes the alphabet, and $R=\{(u_i,v_i)| i\in I\}$, where $u_i,v_i\in X^+$  denotes the set of positive relations. If $(u_i,v_i)\in R$, the words $u_i$ and $v_i$ are called \textit{$R$-words}. The pair $\langle X|R\rangle$ is called is called a \textit{positive presentation}. The semigroup generated by the set $X$ and subject to the set of relations $R$ is denoted by $Sg\langle X|R\rangle$. The group generated by $X$ and subject to the set of relations $R$ is denoted by $Gp\langle X|R\rangle$. Similarly, the inverse semigroup generated by $X$ and admitting $R$ as its set of relations is denoted by $Inv\langle X|R\rangle$.

We can associate two undirected graphs to a positive presentation $\langle X|R\rangle$. The \textit{left graph} is denoted by $LG\langle X|R\rangle$. In $LG\langle X|R\rangle$, the vertices are labeled by the elements of $X$,  there is an edge corresponding to every relation $(u_i,v_i)\in R$, that connects the prefix letter of $u_i$ with the prefix letter of $v_i$. Similarly, the \textit{right graph} of the presentation $\langle X|R\rangle$ is denoted by $RG\langle X|R\rangle$, and it can be constructed by joining the suffix letter of $u_i$ with the suffix letter of $v_i$, for every $(u_i,v_i)\in R$. A closed path in $LG\langle X|R\rangle$ is called a \textit{left cycle}, and a closed path in $RG\langle X|R\rangle$ is called a \textit{right cycle}. We refer to \cite{Thesis} for further details and examples of the left and right graph graphs of a positive presentation. If there is no cycle in the left and right graphs of a positive presentation $\langle X|R\rangle$, then the presentation $\langle X|R\rangle$ is called a \textit{cycle free} presentation. A cycle free presentation is also called an Adain presentation, because these presentations were first studied by S. I. Adain  \cite{Adian}.  

It has been proved by Meakin \cite{Meakin} that for a given positive finite presentation $\langle X|R\rangle$, it is undecidable whether $Sg\langle X|R\rangle$ is an Adian semigroup, whether $Inv\langle X|R\rangle$ is an Adian inverse semigroup, and whether $Gp\langle X|R\rangle$ is an Adian group.

The following fact proved in  \cite{Eu} is crucial to the study of Adian inverse semigroups. 

\begin{theorem} Adian inverse semigroups are $E$-unitary. 
\end{theorem}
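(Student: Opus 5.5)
The plan follows the approach of \cite{Eu}. We use Stephen's theory of Sch\"utzenberger graphs together with Adian's embedding theorem: when $\langle X|R\rangle$ is cycle free, the semigroup $Sg\langle X|R\rangle$ embeds in the group $G=Gp\langle X|R\rangle$. This is the group-theoretic input we take as given.

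Let $S=Inv\langle X|R\rangle$ and $w\in (X\cup X^{-1})^+$. Since $S/\sigma$ is a homomorphic image of $G$ and in fact equals it (both are defined by the same relations), it suffices to prove the following: if $w=1$ in $G$, then $w$ represents an idempotent of $S$. By Stephen's characterization, $w$ is idempotent in $S$ exactly when, in the Sch\"utzenberger graph $S\Gamma(w)$, the path labelled $w$ starting at the root $\alpha$ ends again at $\alpha$. Therefore we must show that the terminal vertex $\beta$ of $w$ equals $\alpha$ whenever $w=_G 1$.

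Step one is to define a labelling map $\phi$ from the vertices of $S\Gamma(w)$ to $G$. It sends a vertex $v$ to the $G$-value of the label of any path from $\alpha$ to $v$. This is well defined because every closed path in $S\Gamma(w)$ has a label equal to $1$ in $G$, which holds since $S\Gamma(w)$ is built by Stephen's procedure of $R$-expansions and Stallings foldings, both compatible with $G$. With $\phi$ in hand, $w=_G1$ gives $\phi(\alpha)=\phi(\beta)$.

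Step two, the main obstacle, is to show that $\phi$ is injective, so that $\phi(\alpha)=\phi(\beta)$ forces $\alpha=\beta$. We will prove by induction along Stephen's sequence of approximate graphs $\Gamma_0\to\Gamma_1\to\cdots$ that each $\Gamma_k$ has injective labelling. The base case is the linear automaton of $w$ after reduction, equivalently the Munn tree, whose vertices have distinct values in the free group. For the inductive step, an $R$-expansion attaches a new path labelled $v_i$ parallel to a path labelled $u_i$. The new interior vertices are labelled by the $G$-images of proper prefixes of $v_i$. We must rule out that any of these equals an existing label unless the vertices are genuinely identified by folding. Here cycle-freeness enters. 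Acyclicity of the left and right graphs implies, via Adian's results on positive words, that a proper prefix of $v_i$ cannot coincide in $G$ with a vertex label in a way producing a cycle in the left or right graph. Concretely, the equality of labels $\phi(x)=\phi(y)$ gives $p=_G q$ for positive-negative words; Adian's theorem and the embedding of the positive monoid then give $p=q$ in $Sg\langle X|R\rangle$. The resulting coincidences are exactly those already forced by determinization. We expect this inductive verification to be the delicate part. Our first attempt will be to reduce to Adian's ``no left/right cycles implies cancellativity and embedding'' statement applied to positive subwords read along the graph, using the van Kampen diagram picture: a reduced diagram over a cycle-free presentation has no interior cells with boundary cycles that would force extra identifications.

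Finally, we will pass to the limit. $S\Gamma(w)$ is the directed union of the $\Gamma_k$, and injectivity is preserved under directed unions, so $\phi$ is injective on $S\Gamma(w)$. Hence $\alpha=\beta$, $w$ is idempotent, and the natural map $S\to S/\sigma$ is idempotent pure, so $S$ is $E$-unitary.
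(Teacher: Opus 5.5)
The paper does not prove this statement; it quotes it from \cite{Eu}. So your argument has to stand on its own, and it has a genuine gap. Your Step one is fine. $E$-unitarity is equivalent to $\sigma$ being injective on each $\mathscr{R}$-class, that is, to your map $\phi$ being injective on every $S\Gamma(w)$. But this means Step two is the entire content of the theorem, and your argument for it fails.

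\textbf{The inductive invariant is false from the outset.} The Munn tree injects into the free group, but $\phi$ takes values in $G$. Take the Adian presentation $\langle a,b\mid a=bab\rangle$ and the freely reduced word $w=a^{-1}bab$. Then $\alpha\neq\beta$ in $MT(w)$, yet $\phi(\alpha)=\phi(\beta)=1$. More generally, no invariant of the form ``$\phi$ is injective on $\Gamma_k$'' can hold at every stage. The maps $\Gamma_k\to\Gamma_{k+1}$ preserve $\phi$, so injectivity at each stage would mean Stephen's procedure never identifies two existing vertices. In this example it must identify $\alpha$ and $\beta$, because $w=a^{-1}a$ is idempotent. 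What you actually need is that vertices with equal $G$-value are \emph{eventually} identified, which is just the theorem restated. For the same reason, $S\Gamma(w)$ is in general only a direct limit of the $\Gamma_k$, not a directed union.

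\textbf{The tool you invoke for the inductive step does not apply.} The values $\phi(x)$ are images of arbitrary words over $X\cup X^{-1}$. Adian's embedding $Sg\langle X|R\rangle\hookrightarrow G$ only says that two \emph{positive} words equal in $G$ are equal in the semigroup. It gives no information about an equality $p=_G q$ with $p,q$ mixed, and that is exactly the situation in $S\Gamma(w)$ once $w\notin X^+$. Even for positive words, knowing $p=q$ in the semigroup would not show that the paths needed to force a folding are already present in $\Gamma_k$. So your claim that ``the resulting coincidences are exactly those already forced by determinization'' is unsupported.

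\textbf{What is missing} is an ingredient that controls arbitrary relations $w=_G1$. One example would be a Remmers-type analysis of reduced van Kampen diagrams with arbitrary boundary label over a cycle-free presentation, for instance using the fact that acyclicity of the left and right graphs rules out interior sources and sinks. It would need to be strong enough to dismantle such a diagram by applying the defining relations inside $Inv\langle X|R\rangle$. That would show $w$ is equal to, or lies below, a word that freely reduces to $1$, and hence that $w$ is idempotent. Without an argument of this kind, your proposal only reformulates what has to be proved.
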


It was shown in early 1930's by Magnus \cite{Magnus} that the word problem for one relator group $Gp\langle X|w=1\rangle$ is decidable. So, if $M=Inv\langle X|u=v\rangle$ happens to be an Adian inverse semigroup, then the membership problem for the set of idempotents on $M$, $E(M)$, is decidable. A word $w$ represents an element of $E(M)$ iff $w/\sigma=1$ in $Gp\langle X|u=v\rangle$ (by $E$-unitary) which can be decided by Magnus. 
 
 Few years back in 2020, Gray \cite{RG} proved a crucial result that in general the word problem for one relator $E$-unitary inverse semigroup is undecidable. 

 The following result was initally proved by Adian \cite{Adian} for only finite Adian presentations. Later Remmers \cite{RM} proved the same result by using semigroup diagrams for any Adian presentation. 

\begin{theorem}\label{embedding} An Adian semigroup embeds in the Adian group with the same presentation. 

\end{theorem}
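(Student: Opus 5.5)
Since $Sg\langle X|R\rangle$ maps canonically onto the image of $X^+$ in $Gp\langle X|R\rangle$, the plan is to show that two positive words equal in the group are already equal in the semigroup. I would argue with diagrams, following Remmers. If $u,v\in X^+$ are equal in $Gp\langle X|R\rangle$, then by van Kampen's lemma there is a reduced planar diagram (a van Kampen diagram) over $R$ whose boundary reads $uv^{-1}$. Every relator region has a boundary made of two positive paths, labelled $u_i$ and $v_i$, which share their initial vertex and their terminal vertex. Orient each edge by its label. Every region then has exactly one source vertex and one sink vertex on its boundary.

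The goal is to show that the whole diagram is a \emph{semigroup diagram}: it should have a unique global source and a unique global sink, namely the common endpoints of $u$ and $v$, and every interior vertex should be neither a source nor a sink. Such a diagram can be peeled region by region from the $u$-side. At each step one replaces a subword $u_i$ by $v_i$ or vice versa, and this produces a derivation $u\to\cdots\to v$ in $Sg\langle X|R\rangle$.

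The key step, and the main obstacle, is to rule out "bad" vertices, meaning interior sinks or sources, or extra sinks and sources on the boundary. I would use the cycle-free hypothesis in the following way. Suppose $p$ is an interior vertex that is a sink. Go around $p$. Consecutive regions around $p$ share an edge ending at $p$, and that edge is the last letter of an $R$-word in each of the two regions. So each region contributes an edge of $RG\langle X|R\rangle$ joining the last letters of its two boundary paths at $p$, and going once around $p$ traces a closed path in the right graph. One has to check that this closed path is nontrivial, i.e. a genuine right cycle. I would get this from the reducedness of the diagram: a backtrack would mean two adjacent regions that are mirror images, and these could be cancelled. This gives the contradiction, and the left graph handles interior sources symmetrically.

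For boundary vertices, and for arguing that interior vertices with several incoming and outgoing edges cause no trouble, I would add an Euler-characteristic count in the style of Remmers. Summing the "sink/source excess" over vertices, edges and regions gives a total fixed by $V-E+F=1$. Since each region has exactly one source and one sink, and the boundary has exactly one of each, there is no room for additional extremal vertices once the interior ones are excluded. Degenerate cases also need care: diagrams with cut vertices, or with regions meeting along paths. These are handled by decomposing into disk components and inducting on the number of regions, removing a region that has a boundary arc along $u$.

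I expect the right-cycle extraction to be the delicate point. One must make sure the sequence of last letters around a sink genuinely forms a cycle and does not degenerate into a path that backtracks. This is exactly where reducedness and the cycle-free hypothesis must be combined.
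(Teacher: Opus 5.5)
The paper gives no proof of this theorem; it only cites Adian's original argument and Remmers' semigroup-diagram proof, and your plan is a faithful and sound reconstruction of Remmers' route. You take a minimal (hence reduced) van Kampen diagram for $uv^{-1}$, and rule out interior sinks and sources because a backtrack in the closed walk induced in $RG\langle X|R\rangle$ or $LG\langle X|R\rangle$ is exactly a cancellable pair. The corner count then gives $\sum_v (k_v-1)=-2$, where $2k_v$ is the number of in/out alternations around $v$; this forces the common endpoints of $u$ and $v$ to be the only global source and sink, and $k_v=1$ everywhere else, which is precisely the structure the peeling step needs. The non-disk cases are handled as you indicate, by stripping common first and last letters and splitting at cut vertices.
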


Theorem \ref{embedding} ensures the decidability of the word problem for all one relation Adian semigroups, as they are embedded in the corresponding one relation Adian groups, who have decidable word problem by Magnus \cite{Magnus}. 

Adian conjectured \cite{Adian}, the word problem is decidable for all Adian semigroups.. Sarkisian \cite{Sar} provided a proof of Adian's conjecture, but later Adian found a gap in her proof. So, this is still an open research problem.  The word problem for Adian inverse semigroup is also open. 

Inam in \cite{Paper 2}, and \cite{Paper 3} studied the word problem for Adian inverse semigroups. The following theorem was proved in \cite{Paper 2}.

\begin{theorem}\label{PMT} Let $M=Inv\langle X|R\rangle$ be a finitely presented Adian inverse semigroup. Then the Sch\"{u}tzenberger graph of $w$, for all $w\in (X\cup X^{-1})^+$, is finite if and only if the Sch\"{u}tzenberger graph of $w'$ is finite, for all $w'\in X^+$. 
\end{theorem}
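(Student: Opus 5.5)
The forward direction is immediate, since $X^+\subseteq (X\cup X^{-1})^+$. For the converse, assume $S\Gamma(w')$ is finite for every positive word $w'\in X^+$, and fix an arbitrary $w\in (X\cup X^{-1})^+$. The plan is to build $S\Gamma(w)$ by Stephen's procedure and show that the process stops after finitely many steps.

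Stephen's procedure starts from the linear automaton of $w$. It then alternately performs \emph{elementary expansions} and \emph{Stallings foldings}: whenever a path labeled by one side $u_i$ of a relation $(u_i,v_i)\in R$ occurs between two vertices, a path labeled $v_i$ is sewn on between them, followed by determinization. $S\Gamma(w)$ is the direct limit of this process. The key structural observation I would use is the following. Write
\[
w = w_1 w_2 \cdots w_k,
\]
where the maximal subwords of $w$ are alternately positive words and inverses of positive words. Then the linear automaton of $w$ is a concatenation of segments, each read forwards or backwards along a positive word $p_j\in X^+$.

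Because $R$ consists of positive relations, every expansion sews on positively labeled paths. My strategy is to first complete each segment $p_j$ separately. Applying to that segment the (finitely many) expansion steps that produce $S\Gamma(p_j)$ grafts a finite copy of $S\Gamma(p_j)$ onto the linear automaton. This copy is finite by hypothesis and is computable, since all the expansions involved are finite. Gluing these finite graphs at the junction vertices gives a finite inverse automaton $A_0$, which still must be closed under expansion.

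The main obstacle is that new $R$-words may be read along paths that cross a junction between segments, or that run through two different grafted copies. Such paths could trigger further expansions without bound. To control this, I would use the Adian (cycle-free) hypothesis in the style of Remmers' diagrams and the $E$-unitary theorem. At a junction, $w$ switches orientation, so a crossing path reads $\ldots x\, y^{-1}\ldots$ or $\ldots x^{-1}y\ldots$. Any relator path must then be positive, and it passes through the junction vertex only by entering through one incoming edge and leaving through an outgoing one.

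I would argue that every positive path in $A_0$ through a junction vertex already lies, after folding, inside the Schützenberger graph of a positive word. Concretely, I would take the positive word read along the maximal positive path containing the junction and show it is finite by the hypothesis. Cycle-freeness of the left and right graphs prevents two distinct relators from overlapping at the junction in a way that forces a new identification. This is exactly the ingredient that makes Adian diagrams behave like planar van Kampen diagrams without cancellation.

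Iterating, I expect only finitely many such positive words to arise, bounded by the number of junctions and the sizes of the already finite pieces. The process therefore terminates in a finite closed automaton, which is $S\Gamma(w)$. The most delicate point, and where I would spend the effort, is proving that the crossing positive paths do not generate an infinite chain of new positive words. I would first try an induction on the number of alternations $k$ of $w$, treating $S\Gamma(w)$ as obtained from $S\Gamma(w_1\cdots w_{k-1})$ and $S\Gamma(p_k)$ by a single amalgamation along a vertex.
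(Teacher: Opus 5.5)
Your forward direction and your construction of the finite approximate automaton $A_0$ are fine. The converse, however, is not proved. The step you flag as ``most delicate'', that closing up $A_0$ terminates, is the entire content of the theorem, and the proposal gives no argument for it.

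The suggested bound ``by the number of junctions and the sizes of the already finite pieces'' is circular. Each round of attaching $S\Gamma(u)$ along a new positive path enlarges the pieces and can create positive paths with labels not seen before. Nothing you say shows that only finitely many positive words $u$ ever get attached.

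The induction on the number of alternations has the same hole. $S\Gamma(w_1\cdots w_k)$ is the \emph{closure} of the amalgam of $S\Gamma(w_1\cdots w_{k-1})$ and $S\Gamma(p_k)$ at a vertex, not the amalgam itself, and finiteness of that closure is exactly what must be shown. Take the Adian presentation $\langle a,b\mid ab=ba\rangle$ and $w=ab^{-1}b^2$. Gluing the closed graphs $S\Gamma(ab^{-1})$ and $S\Gamma(b^2)$ and folding already yields an unsaturated path labeled $ab$ whose two edges come from different pieces.

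Finally, you state without argument that cycle-freeness prevents relators from overlapping at a junction in a way that forces new identifications. Even if true, it would not bound the iteration.

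The missing idea is an invariant tying every positive path to the finitely many vertices of the linear automaton of $w$, combined with the place where the Adian hypothesis really enters, namely Theorem~\ref{embedding}.

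First, by induction along Stephen's procedure, every vertex of $S\Gamma(w)$ either is the image of one of the at most $|w|+1$ vertices of the linear automaton, or lies on a nonempty positive path between two such images $g,h$. An elementary expansion sews a positive path from $v_1$ to $v_2$, where $v_1$ is reachable positively from some such $g$ and $v_2$ reaches some such $h$ positively. Folding maps positive paths to positive paths, so the invariant is preserved.

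Second, fix such a pair $(g,h)$ and let $q,q'\in X^+$ label paths from $g$ to $h$. Let $m\in M$ be the element at $g$. Then $mq=mq'$, and cancelling $m\sigma$ gives $q\sigma=q'\sigma$ in $Gp\langle X|R\rangle=M/\sigma$. By Theorem~\ref{embedding}, $q=q'$ in $Sg\langle X|R\rangle$, hence in $M$, so $q'\in L(q)$.

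Third, since $q$ can be read from $m$, we have $mqq^{-1}=m$. So $x\mapsto mx$ is a morphism $S\Gamma(q)\to S\Gamma(w)$ sending the initial vertex to $g$, and every positive path from $g$ to $h$ is the image of a path in $S\Gamma(q)$.

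Hence $S\Gamma(w)$ is covered by at most $(|w|+1)^2$ images of the finite graphs $S\Gamma(q_{g,h})$, with $q_{g,h}\in X^+$, together with at most $|w|+1$ further vertices. It is therefore finite, and no analysis of crossings at junctions or of termination is needed.
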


By using Theorem \ref{PMT}, Inam proved the decidability of the word problem for the Baumslag-Solitar inverse semigroups $Inv\langle a,b|ab^m=b^na\rangle$. 

The following theorem was proved in \cite{Paper 3}. 

\begin{theorem}\label{MT2}  Let $M=Inv\langle X|R\rangle$ be a finitely presented Adian inverse semigroup.  The Sch\"{u}tzenberger graph of every positive word  is finite if and only if the subgraphs of $S\Gamma(w)$, for all $w\in X^+$, generated by all the $R$-words are finite.

\end{theorem}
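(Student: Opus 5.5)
The statement is Theorem \ref{MT2}: for a finitely presented Adian inverse semigroup $M=Inv\langle X|R\rangle$, $S\Gamma(w)$ is finite for every $w\in X^+$ if and only if, for every $w\in X^+$, the subgraph of $S\Gamma(w)$ generated by all the $R$-words is finite. The plan is to build $S\Gamma(w)$ by Stephen's iterative procedure and to use the Adian (cycle-free) condition to control where expansions attach.

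\emph{Forward direction.} This is immediate. If $S\Gamma(w)$ is finite, then any subgraph of it is finite, in particular the subgraph spanned by the paths labelled by $R$-words.

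\emph{Reverse direction.} Assume the $R$-word subgraphs are finite for every positive word, and fix $w\in X^+$. We start from the linear automaton of $w$ and apply elementary $R$-expansions and Stallings foldings. The points to establish, in order, are the following.

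\begin{itemize}
\item Every expansion in the procedure attaches along a path labelled by an $R$-word. Hence every vertex and edge outside the linear automaton of $w$ lies on such a path.
\item Consequently $S\Gamma(w)$ equals the union of the linear automaton of $w$ with the subgraph generated by the $R$-words read inside $S\Gamma(w)$.
\item That subgraph is finite by hypothesis, and the linear automaton of $w$ is finite, so $S\Gamma(w)$ is finite.
\end{itemize}

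\emph{Care with folding.} Folding can merge new vertices with old ones. So we need to show the union above is genuinely all of $S\Gamma(w)$, rather than merely the image of the construction after folding. Here we use that $M$ is $E$-unitary (the theorem of \cite{Eu} quoted above), so $S\Gamma(w)$ maps into the Cayley graph of $Gp\langle X|R\rangle$. We also use the absence of left and right cycles: a positive $R$-word path cannot fold back onto itself or onto another positive path at its initial or terminal letter in an uncontrolled way. This gives that the $R$-word cells glue along positive paths in a tree-like (van Kampen diagram) pattern, exactly as in Remmers' diagram argument behind Theorem \ref{embedding}.

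\emph{Extending to all words.} With finiteness for all positive $w$ in hand, Theorem \ref{PMT} extends the conclusion to all $w\in(X\cup X^{-1})^+$ if the full statement is wanted.

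\emph{Main obstacle.} The hardest step is showing that every edge of $S\Gamma(w)$ is accounted for, that is, that folding does not generate infinitely many edges outside any $R$-word subgraph. The first approach would be an induction on the stage of Stephen's sequence: each new edge lies either in the image of the previous stage or on a freshly attached $R$-word path. The cycle-free condition would then be used to show that folds only identify edges along common prefixes or suffixes of positive paths, and so create no new edges outside that union.
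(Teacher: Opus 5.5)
This paper does not prove Theorem \ref{MT2}; it quotes it from \cite{Paper 3}. So there is no in-paper argument to compare against, and your proposal has to stand on its own.

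\textbf{Your reading gives a complete proof.} Read the phrase as ``the subgraph of $S\Gamma(w)$ spanned by all paths labelled by $R$-words''. Then your three bulleted steps are a complete proof. The paragraphs on ``care with folding'' and the ``main obstacle'' address a non-issue, for three reasons.
\begin{enumerate}
\item Determinization is a quotient: it identifies vertices and edges but never creates any.
\item Any label-preserving graph morphism carries a path labelled by $s$ to a path labelled by $s$.
\item Every edge of $S\Gamma(w)$ is the image of an edge of some finite stage $(\alpha_i,\Gamma_i(w),\beta_i)$. This is Stephen's direct-limit description; for $w\in X^+$ in the Adian case the stages are even nested, by Proposition 3 of \cite{Paper 2}.
\end{enumerate}
Given these facts, the induction you sketch at the end finishes the argument at once: each edge is either the image of an edge of the previous stage or lies on a freshly sewn $R$-word path. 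Hence $S\Gamma(w)$ is the image of the finite linear graph of $w$ together with the $R$-word subgraph.

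None of the following is needed: $E$-unitarity, the embedding into the Cayley graph of $Gp\langle X|R\rangle$, a Remmers-type diagram argument, or cycle-freeness. The extension to arbitrary words via Theorem \ref{PMT} is not part of the statement.

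\textbf{Caution about the definition.} Your argument never uses the Adian hypothesis; it works verbatim for any positive presentation. So under your reading the theorem is essentially immediate from Stephen's construction. This suggests the undefined phrase ``subgraphs generated by the $R$-words'' may be meant differently in \cite{Paper 3}. One possibility is the expansion-closure of each individual $R$-word occurrence, giving one subgraph per occurrence.

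Under such a per-occurrence reading, finiteness of each generated subgraph does not by itself bound $S\Gamma(w)$. New $R$-word occurrences can appear across junctions, either between the path of $w$ and a sewn-on path or between two sewn-on paths. You would then also have to show that only finitely many such generated subgraphs occur, and the Adian condition would have to enter there. So state the definition explicitly before writing the proof. With your definition, the short argument above is all that is needed.
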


Inam used Theorem \ref{MT2} to show the decidability of  the word problem for the following classes of one relation Adian inverse semigroups. 

\begin{theorem}\label{MT1} Let $M=Inv\langle X|u=v\rangle$ be an Adian inverse semigroup, such that no $R$-word is a subword of the other $R$-word, and the relation $(u,v)$ is in one of the following forms:
\begin{enumerate} 

\item No $R$-word overlaps with itself or with the other $R$-word.

\item One of the $R$-words overlaps with itself, and the other $R$-word neither overlaps with itself nor with the former $R$-word. 

\item Both $R$-words overlap with themselves, there is no overlap between both the $R$-words, and at least one of the $R$-words is not of the form $x^n$, for some $x\in X^+$ and $n\geq 2$.  

\item A prefix of one $R$-word is a suffix of the other $R$-word, no suffix of the former $R$-word is a prefix of  the latter $R$-word, and no $R$-word overlaps with itself. 
\end{enumerate}

Then the word problem is decidable for $M$. \end{theorem}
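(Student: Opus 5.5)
Theorem \ref{MT1} is the main result of \cite{Paper 3}, and I would follow the strategy of that paper: reduce the word problem to finiteness of Sch\"{u}tzenberger graphs and then verify finiteness in each of the four overlap configurations. For an inverse semigroup $M=Inv\langle X|R\rangle$, Stephen's theory gives: $w_1=w_2$ in $M$ iff $w_1\in L(S\Gamma(w_2))$ and $w_2\in L(S\Gamma(w_1))$. If every Sch\"{u}tzenberger graph is finite and effectively constructible, the word problem is decidable. Starting from the linear automaton of $w$, Stephen's procedure (elementary $R$-expansions followed by Stallings/determinization foldings) then terminates, since each step either adds a bounded amount of structure or folds, and the resulting graph cannot grow beyond a finite one. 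By Theorem \ref{PMT} it suffices to treat positive words $w\in X^+$. By Theorem \ref{MT2} it further suffices to show that, for each positive $w$, the subgraph of $S\Gamma(w)$ generated by all $R$-words (all paths labeled $u$ or $v$ and their closures) is finite.

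The core step is a structural analysis of how $R$-words can be read in $S\Gamma(w)$ when $w$ is positive and the presentation is Adian. Because the left and right graphs of $\langle X|u=v\rangle$ have no cycles, $u$ and $v$ start with different letters and end with different letters. Since $M$ is $E$-unitary, by the earlier theorem, and the Sch\"{u}tzenberger graph is deterministic, each $R$-word labels a path between two vertices in at most one way. An expansion attaching a $v$-path parallel to a $u$-path (or vice versa) therefore creates a new ``cell'' whose boundary meets the old graph only at its endpoints, up to foldings forced by shared prefixes or suffixes. I would show that new occurrences of $u$ or $v$ created by a sewn-on cell can only arise from overlaps: a suffix of the new word combined with a prefix of an adjacent word, or the reverse. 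The hypothesis that neither $R$-word is a subword of the other rules out occurrences lying wholly inside a new cell.

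Then I would go case by case and bound the number of cells generated. In case (1) no new $R$-word can straddle a cell boundary, so the number of expansions is at most the number of $R$-word occurrences in $w$, and the graph is finite. In case (2) self-overlaps of, say, $u$ let $u$-occurrences chain, but each expansion replaces them by $v$-cells that admit no overlaps. Any chain therefore stops after finitely many steps, bounded by $|w|/\text{period}$. In case (3) both words self-overlap, and the danger is an infinite alternation in which periodic structure feeds itself. The assumption that one of the words is not a proper power $x^n$ prevents a $v$-cell from reproducing a full $u$ along its period. In case (4) a prefix of $u$ is a suffix of $v$ but not conversely, so overlaps only propagate in one direction along $w$. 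A left-to-right induction on positions of $w$ then shows each position spawns boundedly many cells.

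The main obstacle is case (3), together with showing that foldings never create new $R$-word paths beyond those predicted by the overlap analysis. For that I would try the Adian/Remmers diagram arguments behind Theorem \ref{embedding}: no two cells fold into each other beyond their common boundary. With that in hand, a counting argument on the periods of $u$ and $v$ (Fine--Wilf style) should bound the chains.
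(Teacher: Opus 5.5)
The paper gives no proof of Theorem~\ref{MT1}. It quotes the result from \cite{Paper 3}, remarking only that it was obtained from Theorem~\ref{MT2}. Your reduction (Stephen's theorem, then Theorem~\ref{PMT} to pass to positive $w$, then Theorem~\ref{MT2}) is consistent with that. However, it is the only part of your proposal that is actually carried out. The substance of the theorem, that for positive $w$ Stephen's procedure produces only finitely many cells in each of the four cases, is asserted rather than proved.

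\textbf{The no-folding lemma is missing.} Your overlap principle rests on the claim that a sewn cell meets the rest of the graph only at its two endpoints, and you defer this to an unspecified Remmers-type argument. The principle is only as good as this lemma. If a sewn path could partially fold onto existing edges, its unfolded middle would hang from old vertices. An $R$-word could then enter it after reading arbitrary old edges, with no overlap between $u$ and $v$ involved. For one relation and positive $w$ the lemma is in fact short:
\begin{itemize}
\item Induct on the construction, with the invariant that every vertex lies on the $w$-path or is interior to exactly one cell, and each cell is parallel to a path labelled by the other side.
\item Suppose a $u$-path from $x$ to $y$ is unsaturated and $x$ already has an out-edge labelled by the first letter of $v$.
\item If that edge starts a cell, the cell is a $v$-cell, and by determinism its parallel $u$-path is the given one.
\item Otherwise it is the next edge of the $w$-path or cell through $x$. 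Then the first edge of the $u$-path, carrying a different letter, starts a $u$-cell. By determinism that cell is the whole $u$-path, and its parallel $v$-path runs from $x$ to $y$.
\item Either way the segment was already saturated. The dual argument at $y$ uses last letters, and simultaneous sewings are handled the same way.
\end{itemize}

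\textbf{Termination in cases (3) and (4) is not established.} This is the more serious gap; here you offer heuristics rather than arguments.

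In case (3), granting the lemma, a $v$-cell can never create a new occurrence of $u$, since $u$ and $v$ do not overlap and neither is a subword of the other. So the mechanism you invoke (the non-power hypothesis stopping a $v$-cell from reproducing $u$ along its period) concerns something that cannot happen. You also never locate where that hypothesis is actually used. The real danger is alternation: a $v$-cell creates a new $v$-occurrence through the self-overlap of $v$; that occurrence receives a $u$-cell, which creates a new $u$-occurrence through the self-overlap of $u$; that occurrence receives a $v$-cell, and so on. Nothing in your sketch supplies a quantity that decreases along such chains; the appeal to a Fine--Wilf style count is a hope, not a bound.

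In case (4), take, say, a prefix of $u$ equal to a suffix of $v$. Then new $u$-occurrences straddle the \emph{ends} of $v$-cells, while new $v$-occurrences straddle the \emph{starts} of $u$-cells. So propagation runs in both directions, contrary to your claim. Moreover, the part of a new occurrence lying beyond its generating cell may itself run through earlier cells. A left-to-right induction on positions of $w$ therefore does not index the cells produced. In both cases you need a genuine well-founded measure on these chains.

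\textbf{A smaller point on case (2).} You overlook that $u$-cells sewn along occurrences of $v$ also create new $u$-occurrences. This is harmless, since those only receive $v$-cells, which create nothing further. But the bound $|w|/\mathrm{period}$ has no justification.
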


In this paper we consider the family of one relation Adian inverse semigroups given by the presentation $Inv\langle a,b|a=ba^nb\rangle$ where $n\geq 1$, and prove Theorem \ref{MT3} given below. In this family of one relation Adian inverse semigroups, the $R$-word $a$ is a proper subword of the other $R$-word $ab^nb$, that makes the Sch\"{u}tzenber graph of every word to be an infinite graph  over the presentation $\langle a,b|a=ba^nb\rangle$, that contains an $R$-word as its subword.

\begin{theorem}\label{MT3}
The word problem for one relation Adian inverse semigroups given by the presentation $Inv\langle a,b|a=ba^nb\rangle$, where $n\geq 1$, is decidable. 
\end{theorem}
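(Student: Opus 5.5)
The plan is to use Stephen's procedure and the standard criterion: for words $w_1,w_2\in (X\cup X^{-1})^+$ we have $w_1=w_2$ in $M=Inv\langle a,b|a=ba^nb\rangle$ if and only if $w_2$ labels a path from the initial to the terminal vertex of the Sch\"{u}tzenberger automaton of $w_1$, and vice versa. These graphs are infinite here: every $a$-edge forces a parallel path labelled $ba^nb$, whose $n$ new $a$-edges force further such paths, and so on. So I cannot hope to build $S\Gamma(w_1)$ completely. Instead I will show that every finite ball around the initial vertex of $S\Gamma(w_1)$ can be computed effectively. This suffices, because a path labelled $w_2$ starting at the initial vertex stays inside the ball of radius $|w_2|$.

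\textbf{Step 1: the graph $\Delta$.} I will first describe $\Delta=S\Gamma(a)$ explicitly. Starting from a single $a$-edge, apply the elementary expansion $a\mapsto ba^nb$ to every $a$-edge, iterating. The presentation is Adian: the left graph and the right graph each consist of the single edge $a$--$b$, so neither has a cycle. Using this, together with the $E$-unitary property (Theorem 1.1), which embeds $S\Gamma(a)$ in the Cayley graph of $G=Gp\langle a,b|a=ba^nb\rangle$, I will argue that no folding ever occurs in this iteration. I also want $\Delta$ to have a recursive, tree-like structure: each $a$-edge $e$ spawns a ``petal'' $ba^nb$, and each of the $n$ $a$-edges of that petal is itself the root of a copy of $\Delta$. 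The intended no-folding argument is this. Two edges with the same label leaving a vertex would have to come from prefix letters of the two sides of a relation being equal after reduction, or from suffix letters being equal. That would produce a left or right cycle, contrary to the Adian hypothesis.

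\textbf{Step 2: structure of $S\Gamma(w)$.} For an arbitrary word $w$, I will show that $S\Gamma(w)$ consists of a finite core $C(w)$ with a copy of $\Delta$ glued along each $a$-edge of the core. Copies glued at distinct core edges, and copies glued at the core versus the core itself, meet only in their gluing edge. The core $C(w)$ is computed as follows. Take the linear automaton of $w$ and fold it. Then apply only those expansions of the \emph{long} side, namely the occurrences of $ba^nb$ in the current graph. Each such expansion adds a single $a$-edge and then folds; repeat this until it stabilizes. I expect termination because each such step reduces a finite measure on a graph of bounded size, roughly the number of vertices identified modulo the $a$-edges. Theorem \ref{MT2} is the model here: in effect I am showing that the subgraph generated by the $R$-word $ba^nb$ is finite, while the $R$-word $a$ generates only the ``free'' infinite part $\Delta$.

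\textbf{Step 3: the decision procedure.} Given Steps 1 and 2, the ball of radius $r$ in $S\Gamma(w_1)$ is obtained by taking $C(w_1)$ and attaching to each $a$-edge of the core the radius-$r$ truncation of $\Delta$, which is finite and computable. Checking whether $w_2$ is readable from the initial vertex to the terminal vertex is then a finite computation, and the same check is done with the roles of $w_1$ and $w_2$ exchanged.

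\textbf{Main obstacle.} The hardest part is the gluing claim in Step 2: a copy of $\Delta$ attached far out must never fold back into the core or into another copy. If it did, the ball of radius $r$ would not be determined by a bounded amount of expansion. My first attempt is to use $E$-unitarity to work inside the Cayley graph of $G$, and to show that any closed walk passing through a petal boundary gives an element of $G$ that cannot be trivial. I would test this with a Magnus-type normal form, or with small cancellation or van Kampen diagram arguments for the single relator $a^{-1}ba^nb$. If that fails, the fallback is a direct combinatorial analysis of how a vertex of a petal could acquire an incoming $ba^nb$-path from outside, again using the absence of left and right cycles.
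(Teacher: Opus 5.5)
Your Step 2 decomposition is false, so the obstacle you flag cannot be removed by $E$-unitarity, Magnus normal forms or van Kampen diagrams. The trouble is the short-side expansion $a\to ba^nb$ at an $a$-edge whose initial vertex already has an outgoing $b$-edge, or whose terminal vertex already has an incoming $b$-edge. The first (last) $b$ of the sewn petal folds onto that edge, and the folding then runs along the $a$-paths of the core.

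Take $w=aa^{-1}ba^{n+1}$. Its Munn tree consists of $\alpha\xrightarrow{a}\gamma$ together with $\alpha\xrightarrow{b}\beta_0\xrightarrow{a}\beta_1\xrightarrow{a}\cdots\xrightarrow{a}\beta_{n+1}$. It contains no path labelled $ba^nb$, so your core $C(w)$ is the Munn tree itself. Now sew the petal on $\alpha\xrightarrow{a}\gamma$. Its $b$ folds onto $\alpha\xrightarrow{b}\beta_0$ and its $a^n$ folds onto $\beta_0\to\cdots\to\beta_n$, leaving one genuinely new edge $\beta_n\xrightarrow{b}\gamma$. The petal sewn on $\beta_n\xrightarrow{a}\beta_{n+1}$ then folds its first $b$ onto this new edge, so $\gamma$ acquires an outgoing $a^n$-path.

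Consequently the copy of $\Delta$ at $\alpha\gamma$ meets the core far outside its gluing edge. Moreover, the copies at $\alpha\gamma$ and at $\beta_n\beta_{n+1}$ share the non-gluing edge $\beta_n\xrightarrow{b}\gamma$. These identifications are forced already in $G$: both $b$-edges leaving $\alpha$ must end at the same vertex of the Cayley graph. So no argument showing that walks through petal boundaries are non-trivial in $G$ can succeed.

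The heuristic behind Step 1 says a fold needs the prefix or suffix letters of two relation sides to coincide, which the Adian condition forbids. That is fine inside $\Delta$, where every edge lies on a relation side. It says nothing about a sewn petal colliding with pre-existing edges of $MT(w)$, which is exactly what happens here.

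The missing idea is the one the paper is built on. Call a vertex special if an $a$-edge and a $b$-edge both start there or both end there. Call it unsaturated if that $a$-edge does not yet carry its petal. The paper's argument then runs in three steps:
\begin{enumerate}
\item When no unsaturated special vertex is present, a round of full $\mathscr{P}$-expansion causes no folding (Lemma \ref{P6}), and none is created afterwards (Lemma \ref{P7}).
\item $b$-edges produced by expansion never create unsaturated special vertices (Lemma \ref{L4}).
\item After $m=p+1$ rounds, with $p$ the length of the longest $b^r$-segment of $MT(w)$, none remain (Lemma \ref{L5}).
\end{enumerate}
So the right ``core'' is $\Gamma_m(w)$, which already includes the short-side expansions at special vertices. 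The closure of $MT(w)$ under long-side expansions is not enough. Beyond $\Gamma_m(w)$, your picture of freely attached petal trees is correct, and it yields the paper's conclusion that the ball of radius $k$ about the initial vertex is already present in $\Gamma_{m+k}(w)$. Without a computable bound of this kind on where folding can occur, your Step 3 is not known to compute the balls of $S\Gamma(w_1)$.
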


\section{Preliminaries}

We denote an alphabet by $X$. The set of all positive words on $X$ is denoted by $X^+$. The set $X^+$  forms a semigroup under the binary operation of concatenation of words, and called the \textit{free semigroup} on $X$. The notation $X^*$ denotes the set $X^+\cup\{\epsilon\}$, where $\epsilon$ denotes the empty word. The empty word serves as the identity element under the binary operation of concatenation of word. The set $X^*$ is called the \textit{free inverse monoid} on $X$.  For a congruence $\rho$ generated by a set of relations $R$, $X^+/\rho$  is the \textit{semigroup} presented as $Sg\langle X|R\rangle$, and $X^*/\rho$ is the \textit{monoid} presented as $Mon\langle X|R\rangle$. We write $w_1\equiv w_2$ to express that $w_1$ and $w_2$ are identical words in $X^*$, and write $w_1=w_2$ to express that $w_1$ and $w_2$ are equal in the monoid $Mon\langle X|R\rangle := X^*/\rho$, where $\rho$ is the congruence generated by $R$. If $\rho$ happens to be the Vagner congruence on the monoid $(X\cup X^{-1})^*$, then $FIM(X):=(X\cup X^{-1})^*/\rho$ is the \textit{free inverse monoid} on $X$. For a subset $R$ of  $ (X\cup X^{-1})^*\times (X\cup X^{-1})^*$, if $\tau$ denotes the congruence relation generated by $R\cup \rho$, then $M=Inv\langle X|R \rangle := (X\cup X^{-1})^*/\tau$ is the inverse monoid  presented by the set of generators $X$ and the set of relations $R$. 

A directed graph in which the edges are labeled by elements of $X$ is called a \textit{labeled directed graph} over a set $X$. The $3$-tuple  $(u,x,v)$ to denotes the edge labeled by $x$ with initial vertex $u$ and terminal vertex $v$.  A \textit{path} or \textit{segment} of length $n$ is a sequence of edges 

$$\{(v_0,x_1,v_1), (v_1, x_2, v_2),..., (v_{n-1}, x_n,v_n)\}$$ 

such that the initial vertex of an edge (except the first) equals the terminal vertex of the previous edge. if $v_0=v_n$, the path is a cycle. We say that the path is labeled by the word $w=x_1 x_2....x_n$ if the word $w$ can be read in the graph starting at $v_0$.

An \textit{inverse word graph} over $X$ is a labeled directed graph over $X\cup X^{-1}$ such that the labeling is consistent with an involution, that is, $(u,x, v)$ is an edge from a vertex $u$ to a vertex $v$ if and only if $(v,x^{-1},u)$ is an edge from $v$ to $u$. A \textit{birooted inverse word graph} is an inverse word graph $\Gamma$ with vertices $\alpha,\beta \in V(\Gamma)$ identified as the start and end vertices, respectively. The \textit{language} $L[A]$ of a birooted inverse word graph $A=(\alpha, \Gamma, \beta)$ is the set of words that label a path from $\alpha$ to $\beta$ in $\Gamma$.  In a birooted inverse word graph over a presentation $\langle X|R\rangle$, for each relation $(r,s)\in R$ and two distinct vertices $v_1$ and $v_2$, if $r$ and $s$ can be read along two directed paths going from $v_1$ to $v_2$, then there is a \textit{region} with boundary given by the pair of paths labeled by $r$ and $s$ starting from $v_1$ and ending at $v_2$. Every region is simply connected, and so is homeomorphic to the open disk.In a birooted inverse word graph, for each relation $(r,s)\in R$ and a vertex $v$, if we can find a segment labeled by one side of the relation $r$ starting from the vertex $v$, but we do not find a segment labeled by the other side $s$ of the relation starting from $v$, then the segment labeled by $r$ is called an \textit{unsaturated segment}. 

 J. B. Stephen \cite{SG} introduced the notion of \textit{Sch\"{u}tzenberger graphs} as a tool to study the word problem for inverse semigroups. If $M=Inv\langle X|R\rangle$ is an inverse semigroup then we may consider the corresponding Cayley graph $\Gamma(M,X)$. The vertices of this graph are labeled by the elements of $M$ and there exists a directed edge labeled by $x\in X\cup X^{-1}$ from the vertex labeled by $m_1$ to the vertex labeled by $m_2$ if $m_2 = m_1x$.   The Cayley graph $\Gamma(M,X)$ is not necessarily strongly connected unless $M$ happens to be a group because it may happen that when there is an edge labeled by $x$ from $m_1$ to $m_2$, there is no edge labeled by $x^{-1}$ from $m_2$ to $m_1$ (so, $m_2 = m_1x$, but $m_1 \neq m_2x^{-1}$). The strongly connected components of $\Gamma(M,X)$ are called the \textit{Sch\"{u}tzenberger graphs} of $M$. For any word $u\in (X\cup X^{-1})^*$, the strongly connected component of $\Gamma(M,X)$ that contains the vertex corresponding to  $u$ is the \textit{Sch\"{u}tzenberger graph of $u$} and it is denoted by $S\Gamma(M,X,u)$. If there is no ambiguity about $M$ and $X$, then the Sch\"{u}tzenberger graph of $u$ is simply denoted by $S\Gamma(u)$.  In \cite{SG} it is shown that the vertices of $S\Gamma(M,X,u)$ are precisely those vertices that are labeled by the elements of the $\mathscr{R}$-class of $u$, i.e., $R_u = \{m\in M \mid mm^{-1} = uu^{-1}\}$.

 For any word $u\in (X\cup X^{-1})^*$, it is useful to consider the \textit{Sch\"{u}tzenberger automaton} $(uu^{-1},S\Gamma(M,X,u),u)$ with initial vertex $uu^{-1}\in M$, terminal vertex $u\in M$ and with the Sch\"{u}tzenberger graph of $u$ as the underlying graph. The language accepted by this automaton is a subset of $(X\cup X^{-1})^*$ and will be denoted as $L(u)$.
 \[ L(u) = \{w\in (X\cup X^{-1})^* \mid w \mbox{ labels a path  from } uu^{-1} \mbox{ to } u \mbox{ in } S\Gamma(M,X,u)\}. \]
Here, $u$ and $w$ may be regarded both as elements of $(X\cup X^{-1})^*$ and as elements of $M$. Thus, $L(u)$ may be regarded as a subset of $(X\cup X^{-1})^*$ or as a subset of $M$.

The following result of Stephen \cite{SG} plays a key role in solving the word problem for inverse semigroups.
\begin{theorem}\label{Stephen's thm}
Let $M=Inv\langle X|R\rangle$ and let $u,v\in (X\cup X^{-1})^*$. 
\begin{enumerate}
\item $L(u)= \{w \mid w \geq u$ in the natural partial order on $M\}$.
\item The following are equivalent:
\begin{enumerate}
\item[(i)] $u = v$ in $M$.
\item[(ii)] $L(u)=L(v)$.
\item[(iii)] $u\in L(v)$ and $v\in L(u)$.
\item[(iv)] $(uu^{-1},S\Gamma(M,X,u), u)$ and $(vv^{-1},S\Gamma(M,X,v),v)$ are isomorphic as automata.
\end{enumerate}
\end{enumerate}

\end{theorem}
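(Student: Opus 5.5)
The statement immediately above is Stephen's theorem, Theorem \ref{Stephen's thm}. The plan is to prove part (1) directly from the definition of $S\Gamma(M,X,u)$ as the strongly connected component of the Cayley graph $\Gamma(M,X)$ containing $u$. Part (2) then follows from part (1) and the antisymmetry of the natural partial order. Throughout I use that the vertices of $S\Gamma(u)$ are exactly the elements of $R_u=\{m\in M\mid mm^{-1}=uu^{-1}\}$, as recalled above. I also use that the natural partial order satisfies $u\leq w$ iff $u=uu^{-1}w$ in $M$.

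\emph{Step 1 (paths in $S\Gamma(u)$).} Edges of $\Gamma(M,X)$ go from $m$ to $mx$. So a path starting at a vertex $m$ and labeled by $w$ must end at $mw$. I first show that an edge $m\to mx$ with both endpoints in $R_u$ has a reverse edge $mx\to m$ labeled $x^{-1}$. Since $(mx)(mx)^{-1}=mm^{-1}$, we get
$$mxx^{-1}=mxx^{-1}m^{-1}m=(mx)(mx)^{-1}m=mm^{-1}m=m.$$
Hence $S\Gamma(u)$ is the full subgraph of $\Gamma(M,X)$ on $R_u$, and it is an inverse word graph.

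\emph{Step 2 ($L(u)\subseteq\{w\mid w\geq u\}$).} Suppose $w$ labels a path in $S\Gamma(u)$ from $uu^{-1}$ to $u$. By Step 1 the path ends at $uu^{-1}w$, so $uu^{-1}w=u$. This is exactly $u\leq w$.

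\emph{Step 3 ($\{w\mid w\geq u\}\subseteq L(u)$).} This is the step I expect to be the main point. Assume $u=uu^{-1}w$, and write $w\equiv pq$ with $p$ an arbitrary prefix. I must show that reading $w$ from $uu^{-1}$ never leaves $R_u$, i.e. that $e:=(uu^{-1}p)(uu^{-1}p)^{-1}=uu^{-1}pp^{-1}$ equals $uu^{-1}$.
\begin{itemize}
\item Since idempotents commute, $e=uu^{-1}\cdot pp^{-1}\leq uu^{-1}$.
\item From $u=uu^{-1}pq$ we get
$$uu^{-1}=(uu^{-1}pq)(uu^{-1}pq)^{-1}=uu^{-1}p\,(qq^{-1})\,p^{-1}uu^{-1}.$$
Now $p\,qq^{-1}p^{-1}\leq pp^{-1}$, so $uu^{-1}\leq uu^{-1}pp^{-1}=e$.
\end{itemize}
Hence $e=uu^{-1}$, so every vertex visited lies in $R_u$. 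The final vertex is $uu^{-1}w=u$, and therefore $w\in L(u)$. This proves part (1).

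\emph{Step 4 (the equivalences in (2)).}
\begin{itemize}
\item (i)$\Rightarrow$(iv): if $u=v$ in $M$, then $uu^{-1}=vv^{-1}$ and the two Sch\"utzenberger automata are literally the same object, since they are defined from the elements of $M$.
\item (iv)$\Rightarrow$(ii): isomorphic automata accept the same language.
\item (ii)$\Rightarrow$(iii): by part (1), $u\geq u$ gives $u\in L(u)=L(v)$, and symmetrically $v\in L(u)$.
\item (iii)$\Rightarrow$(i): by part (1), $u\geq v$ and $v\geq u$. Antisymmetry of the natural partial order on the inverse semigroup $M$ then gives $u=v$.
\end{itemize}
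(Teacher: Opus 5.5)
The paper does not prove this theorem; it is quoted from Stephen \cite{SG}. Your argument is correct and is essentially Stephen's original one: realize $S\Gamma(u)$ as the full subgraph of $\Gamma(M,X)$ on $R_u$, show that $u=uu^{-1}w$ forces $uu^{-1}pp^{-1}=uu^{-1}$ for every prefix $p$ of $w$, and deduce (2) from (1) via antisymmetry of $\leq$.

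The only outside input is the recalled fact $V(S\Gamma(u))=R_u$, which you could also check in one line to make the proof self-contained. A path from $m$ labeled $s$ ends at $ms$, so $n=ms$ gives $nn^{-1}\leq mm^{-1}$; conversely, $nn^{-1}=mm^{-1}$ gives $n=m(m^{-1}n)$, so $n$ is reachable from $m$.
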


 We briefly describe the iterative procedure described by Stephen \cite{SG} for building a Sch\"{u}tzenberger graph. Let $Inv\langle X|R\rangle$  be a presentation of an inverse monoid.

   Given a word $u=a_1a_2...a_n\in (X\cup X^{-1})^*$, the \textit{linear graph} of $u$  is the birooted inverse word graph $(\alpha_u,\Gamma_u,\beta_u)$ consisting of the set of vertices

   \begin{center}

   $V((\alpha_u,\Gamma_u,\beta_u))=\{\alpha_u,\beta_u,\gamma_1,...,\gamma_{n-1}\}$

   \end{center}

   and edges

   \begin{center}

   $(\alpha _u,a_1, \gamma _1),(\gamma _1,a_2,\gamma _2),..., (\gamma _{n-2},a_{n-1},\gamma _{n-1}),(\gamma _{n-1},a_n,\beta _u)$,

   \end{center}

    together with the corresponding inverse edges.

    Let $(\alpha , \Gamma ,\beta )$ be a birooted inverse word graph over $X\cup X^{-1}$. The following operations may be used to obtain a new birooted inverse word graph $(\alpha ',\Gamma ',\beta ')$:

    $\bullet$ \textbf{Determination} or \textbf{folding:} Let $(\alpha,\Gamma,\beta)$ be a birooted inverse word graph with vertices $v,v_1,v_2$, with $v_1\neq v_2$, and edges $(v,x,v_1)$ and $(v,x,v_2)$ for some $x\in X\cup X^{-1}$. Then we obtain a new birooted inverse word graph $(\alpha',\Gamma',\beta')$ via taking the quotient of $(\alpha,\Gamma,\beta)$ by the equivalence relation that identifies the vertices $v_1$ and $v_2$ and the two edges. In other words, edges with the same label coming out of a vertex are folded together to become one edge.

    $\bullet$ \textbf{Elementary $\mathscr{P}$-expansion:} Let $r=s$ be a relation in $R$ and $r$ can be read from $v_1$ to $v_2$ in $\Gamma$, but $s$ cannot be read from $v_1$ to $v_2$ in $\Gamma$. Then we define $(\alpha',\Gamma',\beta')$ to be the quotient of $\Gamma \cup (\alpha _s,\Gamma_s,\beta_s)$ by the equivalence relation that identifies vertices $v_1$ and $\alpha_s$ and vertices $v_2$ and $\beta_s$. In other words, we  ``sew" on a linear graph for $s$ from $v_1$ to $v_2$ to complete the other half of the relation $r=s$.

    An inverse word graph is \textit{deterministic} if no folding can be performed and is \textit{closed} if it is deterministic and no elementary expansion can be performed over a presentation $\langle X|R\rangle$. Note that given a finite inverse word graph it is always possible to produce a determinized form of the graph because determination reduces the number of vertices. So, the process of determination must stop after finitely many steps. We also observe that the process of folding is confluent \cite{SG} .

    If $(\alpha_1,\Gamma_1, \beta_1)$ is obtained from $(\alpha,\Gamma,\beta)$ by an elementary $\mathscr{P}$-expansion, and $(\alpha_2,\Gamma_2,\beta_2)$ is the determinized  form of $(\alpha_1,\Gamma_1,\beta_1)$, then we write $(\alpha,\Gamma,\beta)$\\$\Rightarrow (\alpha_2,\Gamma_2,\beta_2)$ and say that $(\alpha_2,\Gamma_2,\beta_2)$ is obtained from $(\alpha, \Gamma,\beta)$ by a  \textit{$\mathscr{P}$-expansion}. The reflexive and transitive closure of $\Rightarrow$ is denoted by $\Rightarrow ^*$.

    For $u\in (X\cup X^{-1})^*$, an \textit{approximate graph} of $(uu^{-1}, S\Gamma(u), u)$ is a birooted inverse word graph $A=(\alpha,\Gamma,\beta)$ such that $u\in L[A]$ and $y\geq u$ holds in $M$ for all $y\in L[A]$. Stephen showed in \cite{SG} that the linear automaton of $u$ is an approximate graph of $(uu^{-1}, S\Gamma(u), u)$. He also proved the following:

    \begin{theorem}\label{closure}
    Let $u\in (X\cup X^{-1})^*$ and let $(\alpha,\Gamma,\beta)$ be an approximate graph of $(uu^{-1},S\Gamma(u), u)$. If $(\alpha,\Gamma,\beta)\Rightarrow^*(\alpha',\Gamma',\beta')$ and $(\alpha',\Gamma',\beta')$ is closed, then $(\alpha',\Gamma',\beta')$ is the Sch\"{u}tzenberger automaton for $u$.
    \end{theorem}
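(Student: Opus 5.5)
The plan is to prove Theorem \ref{closure} in three stages. First, show that the property of being an approximate graph is preserved by every step of $\Rightarrow^*$. Second, show that the language of a closed approximate graph equals $L(u)$. Third, show that a deterministic, connected inverse word graph is determined up to isomorphism by its language, so that it must coincide with $(uu^{-1},S\Gamma(u),u)$.

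\emph{Stage 1 (invariance).} Let $A=(\alpha,\Gamma,\beta)$ be approximate. Both folding and elementary $\mathscr{P}$-expansion are given by a graph morphism $A\to A'$ that sends roots to roots, so every path survives and $L[A]\subseteq L[A']$; in particular $u\in L[A']$. For the reverse inequality we need $y\geq u$ for every $y\in L[A']$.
\begin{enumerate}
\item \emph{Folding.} A path in the folded graph lifts to a path in the unfolded graph, except that it may jump between the identified endpoints $v_1,v_2$. Such a jump can be replaced by the detour $x^{-1}x$ through the common vertex $v$. So $y$ is obtained from some $y'\in L[A]$ by deleting factors of the form $x^{-1}x$. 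Deleting an idempotent factor can only move a word up in the natural order, so $y\geq y'\geq u$.
\item \emph{Expansion.} A path using the new segment labeled $s$ yields, after replacing each traversal of $s^{\pm1}$ by $r^{\pm1}$, a word of $L[A]$. This word equals $y$ in $M$ because $r=s$ in $M$, so again $y\geq u$.
\end{enumerate}
By induction along $\Rightarrow^*$, the closed graph $A'=(\alpha',\Gamma',\beta')$ is approximate, and therefore $L[A']\subseteq L(u)$ by Theorem \ref{Stephen's thm}(1).

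\emph{Stage 2 (upward closure).} We must show $L(u)\subseteq L[A']$. I will first show that $L[A']$ is a union of $\tau$-classes. Since $\tau$ is generated by elementary substitutions $pxq\leftrightarrow pyq$, it suffices to check that $L[A']$ is preserved when one of the following pairs $(x,y)$ is substituted inside a word:
\begin{enumerate}
\item $(r,s)$ with $(r,s)\in R\cup R^{-1}$;
\item the Vagner pairs $(zz^{-1}z,\,z)$ and $(zz^{-1}tt^{-1},\,tt^{-1}zz^{-1})$.
\end{enumerate}
For (1), closedness gives the parallel path: a segment labeled $r$ from a vertex $v_1$ to $v_2$ forces a segment labeled $s$ from $v_1$ to $v_2$, and conversely. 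For (2), determinism together with the involution means that reading $z^{-1}$ immediately after reading $z$ returns along the same path. Hence $zz^{-1}$ and $tt^{-1}$ label closed loops at the current vertex, and such loops can be inserted, deleted, or commuted without changing the endpoint.

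Now let $y\geq u$, so that $u=uu^{-1}y$ in $M$. Since $u\in L[A']$, the word $uu^{-1}y$ also lies in $L[A']$. Reading $uu^{-1}$ from $\alpha'$ returns to $\alpha'$ by determinism, so $y$ labels a path from $\alpha'$ to $\beta'$. Hence $L(u)\subseteq L[A']$, and the two languages are equal.

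\emph{Stage 3 (uniqueness).} Both $A'$ and the Schützenberger automaton are connected, deterministic inverse automata accepting the same language $L(u)$. Such an automaton is reduced, that is, no two distinct states accept the same future language:
\begin{enumerate}
\item By connectedness, each vertex $p$ has some word $z$ labeling a path from $p$ to $\beta$.
\item If another vertex $q$ also reads $z$ to $\beta$, then reading $z^{-1}$ from $\beta$ reaches both $p$ and $q$.
\item Determinism gives $p=q$.
\end{enumerate}
Every state is also accessible by connectedness. So both automata are the minimal deterministic automaton of $L(u)$, restricted to its non-sink states, and minimal automata are unique up to isomorphism. This gives $A'\cong(uu^{-1},S\Gamma(u),u)$.

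I expect the main obstacle to be Stage 2, specifically the care needed to reduce $\tau$-closure to finitely many local substitution types. Stage 1 and Stage 3 are routine once that reduction is in place.
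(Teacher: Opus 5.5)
The paper does not prove this statement. It quotes it from Stephen and uses it as a black box, so there is no in-paper argument to compare yours against. Your three stages are the standard proof of Stephen's theorem:
\begin{enumerate}
\item approximateness is preserved along $\Rightarrow^*$;
\item a closed approximate graph accepts exactly $L(u)$, because its language is a union of $\tau$-classes;
\item a connected deterministic inverse automaton is the trim minimal automaton of its language, hence determined by it.
\end{enumerate}
Stages 2 and 3 are sound as written.

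The one place that needs repair is the expansion case of Stage 1. After sewing on the linear graph of $s$ and before folding, a path from $\alpha$ to $\beta$ need not cross the new segment from end to end. It may enter at $v_1$, go partway, and turn back to $v_1$; for instance it may read $s_1s_1^{-1}$, where $s_1$ is the first letter of $s$ and $\Gamma$ has no $s_1$-edge leaving $v_1$. Such an excursion is not a traversal of $s^{\pm1}$, so after your replacement the word need not lie in $L[A]$. Also, equality with $y$ in $M$ is not what you actually get.

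The fix uses the tool you already have from the folding case. Cut the path at each visit to an end of the new segment. Each piece running inside the segment does one of two things.
\begin{enumerate}
\item It returns to the end it left. Then its label freely reduces to the empty word, so it is an idempotent of the free inverse monoid. Delete it, moving up in the natural order, exactly as in your folding argument.
\item It crosses to the other end. Then its label equals $s^{\pm1}$ in the free inverse monoid, since it has the same Munn tree and the same reduced form. Replace it by $r^{\pm1}$.
\end{enumerate}
This yields $y\geq y'$ for some $y'\in L[A]$, which is all you need.

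Two implicit hypotheses are also worth stating.
\begin{enumerate}
\item Stage 3 needs $\Gamma$ to be connected. An extra component would survive to the closed graph and the conclusion would fail, so connectedness must be part of the notion of inverse word graph here.
\item Treating each $\Rightarrow$ step as finitely many single folds presumes the graphs are finite. That is the situation throughout the paper.
\end{enumerate}
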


    In \cite{SG}, Stephen showed that the class of all birooted inverse words graphs over $X\cup X^{-1}$ is a co-complete category  and that the directed system of all finite $\mathscr{P}$-expansions of a linear graph of $u$ has a direct limit. Since the directed system includes all possible $\mathscr{P}$-expansions, this limit must be closed. Therefore, by \ref{closure}, the Sch\"{u}tzenberger graph of $u$ is the direct limit.

An elementary $\mathscr{P}$-expansion is said to be \textit{relative to} a birooted inverse word graph $(\alpha,\Gamma,\beta)$ if it can be applied to it. 

    \textbf{Full $\mathscr{P}$- expansion (a generalization of the concept of $\mathscr{P}$-\\ expansion):} A full $\mathscr{P}$-expansion of a finite birooted inverse word graph $(\alpha,\Gamma,\beta)$ over a finite(infinite) presentation $\langle X|R\rangle$  is obtained in the following way:

  $\bullet$ Apply all possible elementary $\mathscr{P}$-expansions on $(\alpha,\Gamma,\beta)$, that are only relative to  $(\alpha,\Gamma,\beta)$, to obtain a new graph denoted by $(\alpha',\Gamma',\beta')$,.

 $\bullet$ Perform all possible foldings in $(\alpha',\Gamma',\beta')$ to obtain its determinized form $(\alpha_1,\Gamma_1,\beta_1)$,
    

    The birooted inverse word graph $(\alpha_1,\Gamma_1,\beta_1)$ is called the full $\mathscr{P}$-expansion of $(\alpha,\Gamma,\beta)$. 

For any word $w\in (X\cup X^{-1})^*$, $(\alpha_0.\Gamma_0(w),\beta_0)$ denotes the linear graph of $w$. We apply Full $\mathscr{P}$-expansion to $(\alpha_i.\Gamma_i(w),\beta_i)$ to obtain $(\alpha_{i+1}.\Gamma_{i+1}(w),\beta_{i+1})$, for $i\in\mathbb{N}\cup\{0\}$.  This recursive process generates a sequence of birooted approximate graphs $\{(\alpha_i,\Gamma_i(w),\beta_i):i\in \mathbb{N}\cup\{0\}\}$. This sequence of birooted approximate graphs converges to the Sch\"{u}tzenberger graph of $w$.  In a finitely presented inverse semigroup $Inv\langle X|R\rangle$, there exists a graph morphism $\phi_i:(\alpha_i,\Gamma_i(w),\beta_i)\to (\alpha_{i+1},\Gamma_{i+1}(w),\beta_{i+1})$, for any $w\in (X\cup X^{-1})^*$, and $i\in \mathbb{N}\cup\{0\}$. If  $Inv\langle X|R\rangle$ happens to be a finitely presented Adian inverse semigroup, and $w\in X^+$, then by Proposition 3 of \cite{Paper 2} the graph morphism $\phi_i:(\alpha_i,\Gamma_i(w),\beta_i)\to (\alpha_{i+1},\Gamma_{i+1}(w),\beta_{i+1})$ turns out to be an embedding, for all $i\in \mathbb{N}\cup\{0\}$. This embedding is as an induced subgraph embedding. Those regions which appear in $(\alpha_i,\Gamma_i(w),\beta_i)$ as a consequence of application of full $\mathscr{P}$- expansion on $ (\alpha_{i-1},\Gamma_{i-1}(w),\beta_{i-1})$, are called the \textit{$i$-th generation regions}, for all $i\in \mathbb{N}$.

\section{Characterization of edges labeled by $b$}

From now on, our alphabet $X$ denotes the set $\{a,b\}$, and $R$ denotes the set $\{(a,ba^nb)\}$, where $n\geq 1$. For a word $w\in (X\cup X^{-1})^*$, the set of all edges that start from a vertex $\gamma$ of an approximate graph of $S\Gamma(w)$ is called the \textit{out-star set} of $\gamma$,  $Star^O(\gamma)$. Similarly, the set of all edges that terminate at a vertex $\gamma$ of an approximate graph of $S\Gamma(w)$ is called the \textit{in-star set} of $\gamma$, $Star^I(\gamma)$. We call a vertex $\gamma$ to be a \textit{special vertex} of an approximate graph of $S\Gamma(w)$ if $Star^O(\gamma)=X$ or $Star^I(\gamma)=X$.  Here $a$ is also an $R$-word. So, in an approximate graph of $S\Gamma(w)$, if $Star^O(\gamma)=X$  and the edge $a$ starting from $\gamma$ is unsaturated or  $Star^I(\gamma)=X$ and the edge $a$ terminating at $\gamma$ is unsaturated, then we call $\gamma$ to be an \textit{unsaturated special vertex}.

\begin{proposition}\label{P1} For all $w\in (X\cup X^{-1})^*$, in $S\Gamma(w)$ over the presentation $\langle X|a=ba^nb\rangle$, where $n\geq 1$, an edge labeled by $b$ can be a boundary edge of at most two regions.

\end{proposition}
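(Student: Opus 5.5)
The plan is to use the shape of the relation together with the fact that $S\Gamma(w)$ is deterministic. $S\Gamma(w)$ is a strongly connected component of the Cayley graph $\Gamma(M,X)$, and it is closed in the sense of Theorem \ref{closure}. So for every vertex $\gamma$ and every letter $x\in X\cup X^{-1}$ there is at most one edge labeled $x$ leaving $\gamma$. Consequently, a word read from a fixed vertex, forwards or (via inverse edges) backwards, traces at most one path.

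\textbf{Where a $b$-edge can sit on a region.} By definition, a region over $\langle X|a=ba^nb\rangle$ is given by two paths from a vertex $v_1$ to a vertex $v_2$. One path is labeled $a$, consisting of a single edge. The other is labeled $ba^nb$, of length $n+2$. An edge labeled $b$ can only lie on the $ba^nb$ side of a region. Within that side, the only $b$-positions are the first letter and the last letter, since the $n$ middle letters are all $a$. Hence if $e=(\gamma_1,b,\gamma_2)$ is a boundary edge of a region, then either $e$ is the initial edge of the $ba^nb$ path, so that $v_1=\gamma_1$, or $e$ is the terminal edge, so that $v_2=\gamma_2$.

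\textbf{Uniqueness in each case.} First suppose $e$ is the initial edge. By determinism, the word $ba^nb$ read from $\gamma_1$ gives a unique path, and hence a unique endpoint $v_2$. The $a$-edge leaving $\gamma_1$ is also unique. So the whole boundary of the region, and thus the region itself, is determined by $\gamma_1$. At most one region has $e$ as its first $b$-edge.

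Now suppose $e$ is the terminal edge. Reading $(ba^nb)^{-1}=b^{-1}a^{-n}b^{-1}$ from $\gamma_2$ is deterministic, since the graph is an inverse word graph and the inverse edges are folded as well. This determines $v_1$ uniquely. The $a$-edge from $v_1$ is unique, and it must end at $\gamma_2$. So at most one region has $e$ as its last $b$-edge.

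\textbf{Conclusion and expected difficulty.} Combining the two cases, $e$ bounds at most two regions. The count is one if the two cases give the same region, which happens when $e$ is both the first and the last edge of a degenerate boundary. The only point needing care, and the one I expect to be the main obstacle, is to make precise that a region is identified with its boundary pair of paths. Two regions with identical boundary paths must count as one region; this is legitimate because expansions are only performed when the other side cannot already be read, so duplicate regions are never sewn on. I would also state explicitly that determinism applies to inverse edges, and that this is what justifies the backward reading in the second case.
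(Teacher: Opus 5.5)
Your proof is correct and takes essentially the same approach as the paper: a $b$-edge can occur only as the first or the last letter of the $ba^nb$ side of a region, and determinism of $S\Gamma(w)$ allows at most one region for each of these two positions. The paper phrases this as a pigeonhole contradiction (among three regions, two would share a position and fold together), whereas you bound each position directly by reading $ba^nb$ forward from $\gamma_1$ and $(ba^nb)^{-1}$ backward from $\gamma_2$; this is the same argument, made slightly more explicit.
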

 \begin{proof} The letter $b$ is the prefix and suffix of the $R$-word $ba^nb$, The boundary of each region of $S\Gamma(w)$, for some $w\in (X\cup X^{-1})^*$, over the presentation $\langle X|a=ba^nb\rangle$, contains precisely two edges labeled by $b$, one is the initial edge and the other one is the terminal edge  of the segment $ba^nb$, that labels one side of the boundary of the region. 
 
 We assume that an edge labeled by $b$ of $S\Gamma(w)$, for some $w\in (X\cup X^{-1})^*$, is common among three regions labeled by $A,B$, and $C$. If this edge labeled by $b$ happens to be the initial edge of one side, labeled by $ba^nb$, of all these three regions, then these regions will get identified with each other and become one region of $S\Gamma(w)$, as $S\Gamma(w)$ is determinized. Similarly, by the same argument the edge labeled by $b$ cannot be the terminal edge of one side, labeled by $ba^nb$, of all these three regions either.
 
  So,  we are forced to assume that this edge labeled by $b$ is the initial edge of one side of one of the regions $A-$say, and terminal edge of the one side of another region $B-$say. If this edge labeled by $b$ is the initial edge of one side of $C$, then $A$ and $C$ will fold together in $S\Gamma(w)$, as $S\Gamma(w)$ is determinized. If this edge labeled by $b$ is the terminal edge of one side of $C$, then $B$ and $C$ will fold together in $S\Gamma(w)$. 
 
 \end{proof} 

  For $w\in (X\cup X^{-1})^*$, the determinized form of the linear graph of $w$, $(\alpha'_0,\Gamma'_0(w),\beta'_0)$, denoted by $(\alpha_0,\Gamma_0(w),\beta_0)$ constitutes the Munn tree of $w$, $MT(w)$ \cite{IS}. This birooted inverse word graph  with the notation $(\alpha_0,\Gamma_0(w),\beta_0)$ refers to the first iterative step of the Stephen's process of  full $\mathscr{P}$-expansion of constructing the Sch\"{u}tzenber graph of $w$.  

A \textit{pivotal transversal} is a maximal segment of $MT(w)$, for some $w\in (X\cup X^{-1})^*$, that consists of edges labeled by $a$ only. So, it is labeled by $a^k$, for some $k>0$.  

To make the construction of Sch\"{u}tzenberger graph of $w$, for some $w\in (X\cup X^{-1})^*$, whose Munn tree contains only one pivotal transversal easy and effective, we classify the edges labeled by $b$ of $MT(w)$ into three classes, the edges of color $0$, edges of color $1$, and edges of color $2$. Let us also point out that if for some $w\in (X\cup X^{-1})^*$, $MT(w)$ contains more than one pivotal transversal, then this classification of edges labeled by $b$ of $MT(w)$ is not effective.

The impetus of this classification is: an edge of color $0$ will not be a boundary edge of any region of $S\Gamma(w)$, an edge of color $1$ will be the boundary edge of precisely one region of $S\Gamma(w)$, and an edges of color $2$ will be the common boundary edge of two regions of $S\Gamma(w)$, , for all $w\in (X\cup X^{-1})^*$.

For some $w\in (X\cup X^{-1})^*$, such that $MT(w)$ contains only one pivotal transversal, an edge labeled by $b$ of $MT(w)$ is an \textit{edge of color $0$} if it lies on a maximal segment labeled by $b^r$, for some $r>0$, that satisfy the following conditions.

\begin{enumerate}
\item  Neither the initial nor the terminal vertex of the segment $b^r$ is a special vertex. 

\item  The initial vertex of the segment $b^r$ is not followed by a directed segment labeled by  $a^{n+!}$, and no segment labeled by $a^{n+1}$ starts from the  terminal vertex of the segment $b^r$ in $MT(w)$.  

\item The initial vertex of the segment $b^r$ is not followed by a directed segment labeled by  $ba^{n}$, and no segment labeled by $a^{n}b$ starts from the  terminal vertex of the segment $b^r$ in $MT(w)$.  

\end{enumerate}

The above three conditions are not the only conditions that are sufficient for an edge to be the edges of color $0$. There is one more condition that also gives rise to the edges of color $0$ and it is mentioned in the following definition.

For some $w\in (X\cup X^{-1})^*$, such that $MT(w)$ contains only one pivotal transversal, an edge labeled by $b$ of $MT(w)$ is an \textit{edge of color $1$} if it lies on a maximal segment labeled by $b^r$, for some $r>0$, that satisfy the following conditions.

\begin{enumerate}
\item The initial vertex or the terminal vertex  of the segment $b^r$ is a special vertex.

\item There exists a segment labeled by $a^{n+1}$ that terminates at the initial vertex of the segment $b^r$, or there exists a segment labeled by $a^{n+1}$ that starts from the terminal vertex of the segment $b^r$ in $MT(w)$.  

\item There exists a segment labeled by $b^sa^n$, for some $s\in\mathbb{N}$, that terminates at the initial vertex of the segment $b^r$ or the segment $b^r$ terminates at the initial vertex of the segment $a^nb^s$, for some $s\in\mathbb{N}$,.

\begin{enumerate}
\item if $n=1$, and $r\leq s$, then every edge of the segment $b^r$ is of color $1$, otherwise, only the first(last) $s$ number of edges of $b^r$ are the edges of color $1$, while the remaining  edges of the segment $b^r$ are of color $0$. 

\item If $n>1$, the only the initial(terminal) edge of the segment $b^r$ is of color $1$, while the other edges are of color $0$.  

\end{enumerate}
\end{enumerate}

\begin{remark} There is an exception to the above definitions.  A maximal segment $b^r$ of $MT(w)$ that contains only one pivotal transversal, for some $w\in (X\cup X^{-1})^*$, can satisfy condition $(2)$ and $(3)$ simultaneously (that is, there can be segments labeled $a^{n+1}$ and $b^sa^n$ that terminate at the initial vertex of the segment $b^r$, or there can be segments labeled by $a^{n+1}$ and $a^nb^s$ that start from the terminal vertex of the segment $b^r$, in $MT(w)$). In this case every edge of the segment $b^r$ will be an edge of color $1$.

\end{remark}

Figure $1$ displays the pivotal transversal, the edges of color $0$ and color $1$ in $MT(w_1)$, for some $w_1\in (X\cup X^{-1})^*$, with respect to the presentation $\langle X|a=ba^3b\rangle$. 

 \begin{figure}[h!]
\centering
\includegraphics[trim = 0mm 0mm 0mm 0mm, clip,width=2.8in]{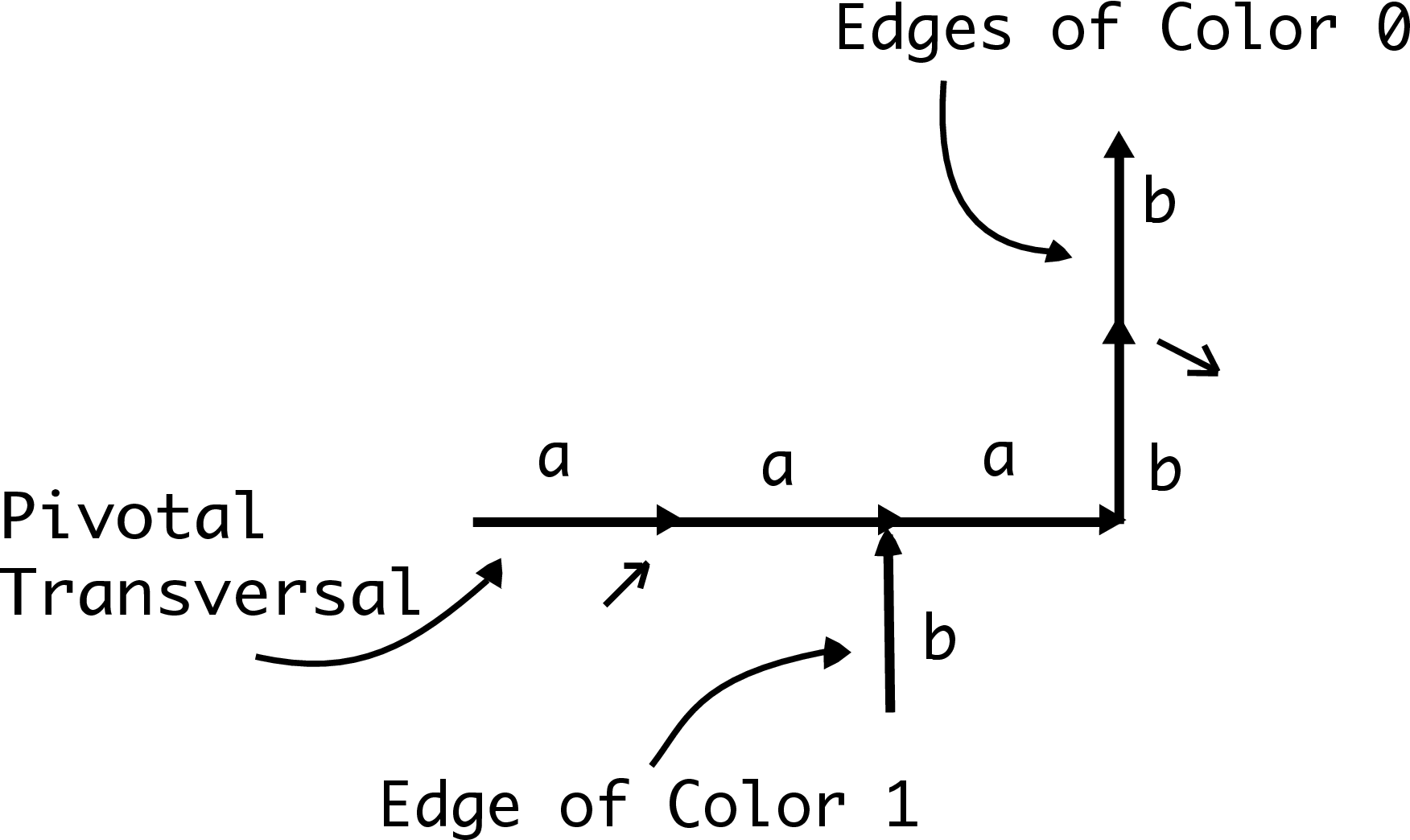}
\caption{$MT(w_1)$, for some $w_1\in (X\cup X^{-1})^*$}
\label{5b}
\end{figure}

For some $w\in (X\cup X^{-1})^*$, in $MT(w)$ that contains only one pivotal transversal, once all the edges of color $0$ and color $1$ recognized, then all the remaining edges labeled by $b$ of $MT(w)$ are the \textit{edges of color $2$}. These edges lie on those maximal segments of the form $b^r$ that satisfy multiple conditions, of the above definition of edges of color $1$, simultaneously.

Figure $2$ displays the pivotal transversal, the edges of color $1$ and color $2$ in $MT(w_2)$, for some $w_2\in (X\cup X^{-1})^*$, with respect to the presentation $\langle X|a=ba^3b\rangle$. 

 \begin{figure}[h!]
\centering
\includegraphics[trim = 0mm 0mm 0mm 0mm, clip,width=3.8in]{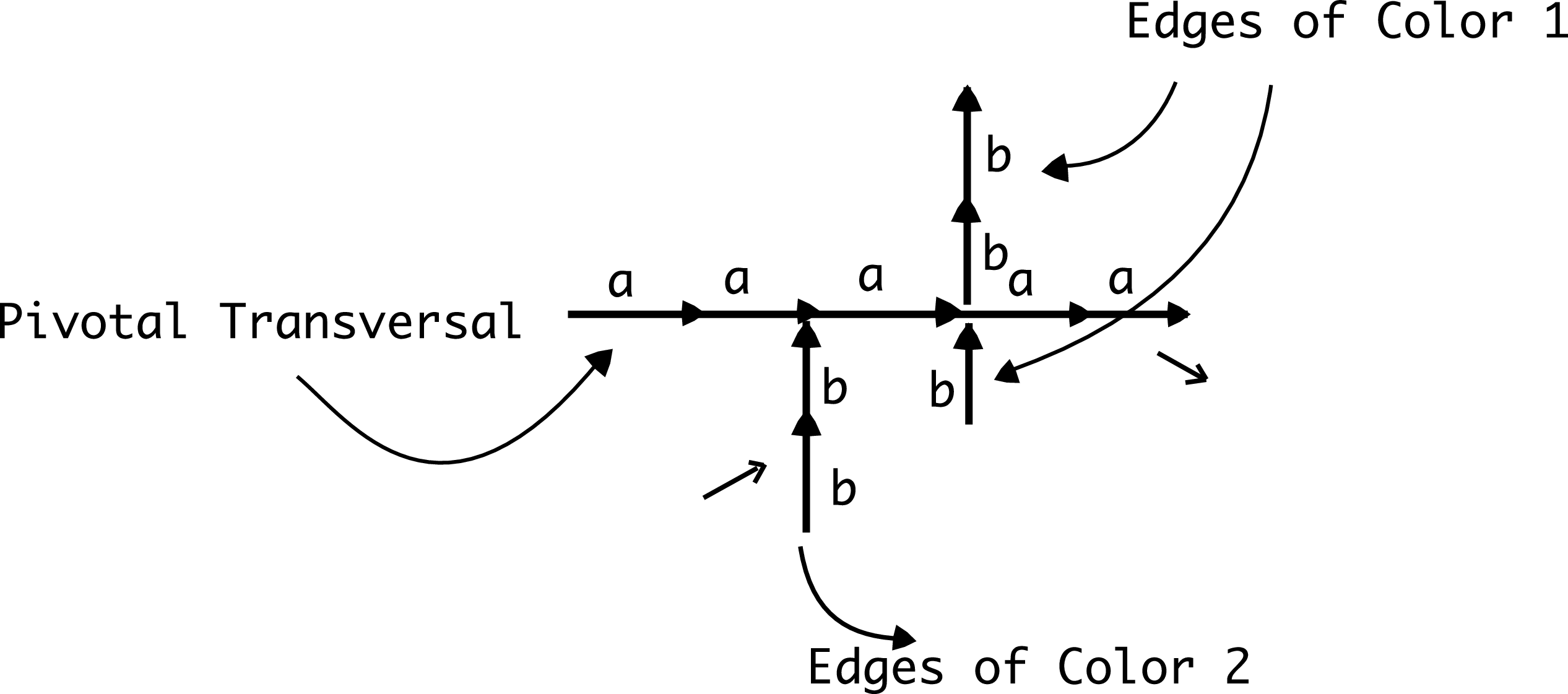}
\caption{$MT(w_2)$, for some $w_2\in (X\cup X^{-1})^*$}
\label{5b}
\end{figure}

 The characterization of edges of color $0$ provided above is established through Proposition \ref{P4}.

\begin{proposition}\label{P4} An edge of color $0$ of $MT(w)$ that contains only one pivotal transversal, for some $w\in (X\cup X^{-1})^*$, is not a boundary edge of any region of $S\Gamma(w)$.
\end{proposition}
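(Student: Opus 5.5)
An edge labeled $b$ lies on the boundary of a region only as the first or the last edge of a side labeled $ba^nb$. The plan is to show that no such side can ever pass through an edge $e$ of color $0$, by induction on the generation $i$ of the approximate graphs $(\alpha_i,\Gamma_i(w),\beta_i)$. Since each $\phi_i$ is an induced subgraph embedding (Proposition 3 of \cite{Paper 2}, in the form used in this setting), and $S\Gamma(w)$ is the direct limit of these graphs, it suffices to show that for every $i$ the edge $e$ is not on the boundary of any $i$-th generation region. There are two ways a region can involve $e$.

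In the first case, $e$ is the first or last edge of a segment labeled $ba^nb$ that already exists in $\Gamma_{i-1}(w)$. An expansion $ba^nb\to a$ then sews an $a$-edge parallel to it. In the second case, $e$ becomes identified, after folding, with the first or last $b$ of a newly sewn side $ba^nb$. That new side was attached along an unsaturated $a$-edge $(\gamma,a,\delta)$. Its initial $b$ starts at $\gamma$ and its final $b$ ends at $\delta$. So the new edge folds onto $e$ only if $e$ starts at $\gamma$ (a $b$ and an $a$ leave the same vertex) or $e$ ends at $\delta$ (a $b$ and an $a$ enter the same vertex).

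\begin{enumerate}
\item \emph{Base step, $i=1$.} Let $b^r$ be the maximal $b$-segment in $MT(w)$ containing $e$. A vertex where $e$ meets an $a$-edge in this way is an endpoint of $b^r$ carrying both an $a$- and a $b$-edge in the relevant direction, so it is a special vertex. Condition (1) excludes this, which rules out the second case.

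For the first case, a side $ba^nb$ through $e$ would force either a segment $ba^n$ starting at $b^r$'s initial vertex region, or a segment $a^nb$ ending at its terminal region. Condition (3) excludes both.

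\item \emph{Cascades at the pivotal transversal.} Condition (2) excludes a subtler effect. The pivotal transversal $a^k$ is the only source of $a$-edges, and expansions along it generate new $b$-edges and new $a^n$ segments. These can make an endpoint of $b^r$ special at a later generation. The point is that the end of an $a^{n+1}$ adjacent to an endpoint of $b^r$ is exactly what would let a newly created $ba^nb$ side fold onto $e$. Condition (2) forbids this.

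\item \emph{Inductive step.} The regions created at generation $i$ sit on the unsaturated $a$-edges of $\Gamma_{i-1}(w)$. With only one pivotal transversal, these are the new $a^n$ interiors of the previous generation's $ba^nb$ sides, together with the original transversal. I will show that every new vertex lies on a $ba^nb$ side hanging off the transversal, at distance greater than $0$ from $b^r$. Hence the local configuration around the endpoints of $b^r$ (the in/out stars and short $a$/$b$ paths) is the same in $\Gamma_i(w)$ as in $MT(w)$. Proposition~\ref{P1} controls how many regions can share each new $b$-edge, and this keeps the folding local.
\end{enumerate}

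The main obstacle is this locality claim: showing that foldings after an expansion never propagate back to the endpoints of $b^r$. To prove it, I would describe the shape of $\Gamma_i(w)$ explicitly as $MT(w)$ with trees of $ba^nb$ "bubbles" grafted only onto $a$-edges. Then I would check that every fold occurs between a new $b$-edge and an edge adjacent to an $a$-edge. That requires the vertex to be special or adjacent to an $a^{n}$ or $a^{n+1}$ pattern, which conditions (1)–(3) exclude near $b^r$.
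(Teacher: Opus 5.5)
Your treatment of a segment satisfying the three colour-$0$ conditions follows the paper's Part I. Condition (1) keeps $a$-edges away from the ends of $b^r$, so no sewn side folds onto $e$. Condition (3) excludes an existing $ba^n$ or $a^nb$. Condition (2) excludes the one-step cascade from the transversal. Two things, however, are genuinely missing.

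\textbf{The second family of colour-$0$ edges.} You cover only one of the two families. The definition also gives colour $0$ to the trailing edges of a segment $b^r$ that satisfies \emph{only} condition (3) of the colour-$1$ definition. These are every edge after the first when $n>1$, and edges $s+1,\dots,r$ when $n=1$ and $s<r$. Your base step cannot handle them. Here a path $b^sa^n$ does end at the initial vertex of $b^r$ (or, dually, $a^nb^s$ starts at its terminal vertex), so condition (3) of colour $0$ fails and the first edge really does lie on a region. For example, for $n=1$ and $w=babb$ the last $b$-edge of $MT(w)$ has colour $0$, although $ba$ ends at the initial vertex of the segment $bb$. This case is the paper's Part II and needs its own argument. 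You must follow the cascade of $a$-edges sewn onto the paths $ba^nb$ that run from $b^s$ through $a^n$ into $b^r$, and show that it stops. For $n=1$ it stops because $b^s$ is used up after $s$ steps. For $n>1$ it stops because the only $a$-edge entering the end of the first edge of $b^r$ is the newly sewn one, and no path $ba^{n-1}$ ever ends at its initial vertex; this is where the failure of condition (2) is used.

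\textbf{The inductive step.} You rightly call this the crux, but it is not carried out, and the route you sketch rests on a false picture.
\begin{enumerate}
\item The configuration at the ends of a colour-$0$ segment does change. Take $w=ab$, so $MT(w)$ is $\alpha_0\xrightarrow{a}\gamma\xrightarrow{b}\beta_0$. The edge $\gamma\xrightarrow{b}\beta_0$ has colour $0$, yet the side $ba^nb$ sewn on $\alpha_0\xrightarrow{a}\gamma$ brings a new $b$-edge into $\gamma$. So $\gamma$ is already a special vertex of $\Gamma_1(w)$.
\item Nor is $\Gamma_i(w)$ just $MT(w)$ with bubbles hung on $a$-edges. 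The end edges of sewn sides fold onto the $b$-branches of $MT(w)$ at transversal vertices. The paths $ba^nb$ thus created across the transversal force new $a$-edges to be sewn; this is exactly the cascade of your step 2.
\item Proposition~\ref{P1} only counts regions on a $b$-edge of $S\Gamma(w)$ and says nothing about where folding occurs.
\end{enumerate}
What has to be propagated through the generations are the specific negative facts that (1)--(3) encode. For an edge $e$ from $x$ to $y$: no $a$-edge ever leaves $x$ or enters $y$, no path $ba^n$ ever ends at $x$, and no path $a^nb$ ever starts at $y$. For an interior edge of $b^r$ these concern the endpoints of $e$, not of $b^r$ as your step 1 assumes. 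The paper handles this by moving edge by edge along $b^r$: once the first edge is free, the remaining $b^{r-1}$ again satisfies the three conditions. The paper also takes the absence of colour-changing foldings as a consequence of there being a single pivotal transversal.

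Finally, the embedding of $\Gamma_i(w)$ into $\Gamma_{i+1}(w)$ quoted in the preliminaries is only available for $w\in X^+$. For general $w$ you cannot assume it, and this is the same locality issue again.
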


\begin{proof}  The proof consists of two parts. The first part deals with those maximal segments of the form $b^r$ of $MT(w)$ that satisfy all the three conditions given in the definition of edges of color $0$. The second part deals with those maximal segments of the form $b^r$   of $MT(w)$ that satisfy exclusively the third condition of the definition of edges of color $1$. Since $MT(w)$ contains only one pivotal transversal, the successive applications of full $\mathscr{P}$-expansion on $(\alpha_0,\Gamma_0(w),\beta_0)$ will not cause any folding of edges that can change the color of an edge labeled by $b$ of $MT(w)$ determined at the beginning. 

\textit{Part I:} We assume that $MT(w)$,  for some $w\in (X\cup X^{-1})^*$ such that $MT(w)$ contains only one pivotal transversal,  contains a maximal segment labeled by $b^r$, for some $r>0$, that satisfy all the three conditions given in the definition of edges of color $0$. We show that the initial edge and all other edges that lie after the initial edge on the segment $b^r$ cannot be the boundary edge of any region of $S\Gamma(w)$. A dual argument can be used to show that the terminal edge and all other edges that lie before the terminal edge on the  segment $b^r$ cannot be the boundary edge of any region of $S\Gamma(w)$. 

The initial edge of $b^r$  cannot get identified with a boundary edge of a region of $S\Gamma(w)$. By condition $(1)$ of edges of color $0$, there is no edge labeled by $a$ that starts from the initial vertex of the segment labeled by $b^r$, therefore, we cannot sew a segment labeled by the other side of the relation $ba^nb$ and fold the initial edge of $b^r$ with the initial edge of the segment $ba^nb$. 
 
 The initial edge of $b^r$ cannot be the terminal edge of a segment labeled by the $R$-word $ba^nb$. By condition $(3)$ of edges of color $0$, there is no segment labeled by $ba^n$  in $MT(w)$ that terminates at the initial vertex of the segment $b^r$. The condition $(2)$ of edges of color $0$ prevents the occurrence of a segment labeled by $ba^n$ that can terminate at the initial vertex of the segment $b^r$ even after one application of full $\mathscr{P}$-expansion on $(\alpha_0,\Gamma_0(w),\beta_0)$.

 Hence the initial edge of the segment $b^r$ cannot be a boundary edge of any region of $S\Gamma(w)$. The second edge of the segment $b^r$ (if it exists) cannot be the boundary edge of any region as well. The segment $b^r$ does not contain any $R$-word, The initial edge of $b^r$ is not a boundary edge of any region, therefore, there cannot be an edge labeled by $a$ that either starts or terminates at the initial vertex of the second edge of the segment $b^r$. This means that the subsegment labeled by $b^{r-1}$ starting from the initial vertex of second edge of the segment $b^r$ satisfy all the three conditions of edges of color $0$. So, by the same argument used above the second edge of the segment cannot be a boundary edge of any region of $S\Gamma(w)$. Continuing in this manner, it follows that none of the edges of the segment $b^r$ can be the boundary edge of any region of $S\Gamma(w)$.

\textit{Part II:} We assume that $MT(w)$, for some $w\in (X\cup X^{-1})^*$, contains a maximal segment labeled by $b^r$, for some $r>0$, exclusively satisfies the third condition of the definition of edges of color $1$. That is, there exists a segment labeled by $b^sa^n$, with $s>0$, that terminates at the initial vertex of the segment $b^r$ in $MT(w)$.  A dual argument can be used for the case where there exists a segment labeled by $a^nb^s$, with $s>0$ starting from the terminal vertex of the segment $b^r$ in $MT(w)$.

There are two possibilities to discuss. 

$i$. If $n>1$, we find a segment labeled by the $R$-word $ba^nb$ starting from the last edge of the segment $b^s$, going along the segment $a^n$ and terminating at the initial edge of the segment $b^r$ in $(\alpha_0, \Gamma_0(w), \beta_0)$. After one application of elementary $\mathscr{P}$-expansion on this unsaturated segment, we cannot find any other unsaturated segment labeled by an $R$-word that utilizes  the second edge of the segment $b^r$, regardless of the fact $s\geq 1$ The second edge (if it exists) of the segment $b^r$ cannot fold to a boundary edge of any other region, as the segment $b^r$ exclusively satisfies the third condition of the definition of edges of color $1$. So, all the  other edges (if they exist) of the segment $b^r$ will not be the boundary edge of any region of $S\Gamma(w)$.

$ii$. If $n=1$ and $s<r$, then it is easy to see that we can find an unsaturated segment labeled by the $R$-word $bab$ starting from the last edge of the segment $b^s$, going along the segment $a$ and terminating at the initial edge of the segment $b^r$ in $(\alpha_0, \Gamma_0(w), \beta_0)$. We apply elementary $\mathscr{P}$-expansion on this unsaturated segment and if $s$ happens to be greater than $1$, we find another unsaturated segment labeled by the $R$-word $bab$ starting from the second last edge of the segment $b^s$ and terminating at the second edge of the segment $b^r$. Continuing in this manner after $s$ number of successive applications of elementary $\mathscr{P}$-expansions will make the first $s$ number of edges  of the segment $b^r$ to be the boundary edges of a region of $S\Gamma(w)$. In $(\alpha_s, \Gamma_s(w), \beta_s)$, we neither find a segment labeled by $bab$ that terminates at the $(s+1)$-th edge of the segment $b^r$, nor the initial vertex of the $(s+1)$-th edge is an unsaturated special vertex. Hence the $(s+1)$-th edge is not a boundary edge of any region. For the same reason, the remaining $(r-s)$ edges of the segment $b^r$ are the edges of color $0$, and they cannot be the boundary edges of any region.

\end{proof}

 The characterization of edges of color $1$ provided above is established through Proposition \ref{P5}.

\begin{proposition}\label{P5} An edge of color $1$ of $MT(w)$ that contains only one pivotal transversal, for some $w\in (X\cup X^{-1})^*$, is on the boundary of precisely one region of $S\Gamma(w)$.
\end{proposition}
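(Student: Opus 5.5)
The plan has two halves: show that every edge of color $1$ lies on at least one region of $S\Gamma(w)$, then show it lies on at most one. As in Proposition \ref{P4}, the single pivotal transversal of $MT(w)$ means that successive full $\mathscr{P}$-expansions of $(\alpha_0,\Gamma_0(w),\beta_0)$ never fold two edges labeled by $b$ of $MT(w)$ together. The colors assigned at the start are therefore stable, and we may argue edge by edge on a maximal segment $b^r$. By Proposition \ref{P1}, each $b$-edge lies on at most two regions, and a region meets a $b$-edge either as the initial edge or as the terminal edge of its side labeled $ba^nb$.

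For existence, I go through the conditions of the definition of edges of color $1$.
\begin{enumerate}
\item If the initial vertex $\gamma$ of $b^r$ is special, $Star^O(\gamma)=X$, so the edge $a$ leaving $\gamma$ is an $R$-word. The expansion $a\to ba^nb$ sews on a segment whose initial $b$ folds with the first edge of $b^r$, which then bounds a region. The dual case, where the terminal vertex is special, is symmetric.
\item If $a^{n+1}$ ends at the initial vertex of $b^r$, expanding the last $a$ of it produces a $b$-edge. That $b$-edge followed by the preceding $a^n$ forms a segment $ba^n$ ending at the initial vertex of $b^r$, so $ba^nb$ is read through the first edge of $b^r$. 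This segment is unsaturated, and the expansion makes the first edge of $b^r$ the terminal edge of a region.
\item This case is Part II of Proposition \ref{P4}. For $n>1$, exactly the initial (terminal) edge gets a region. For $n=1$, the staircase of $bab$ expansions gives regions on the first $\min(r,s)$ edges.
\end{enumerate}
The exception in the Remark is handled by combining conditions (2) and (3) and running the staircase argument, using the $a^{n+1}$ segment to feed the missing $b^sa^n$ prefix.

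For uniqueness, I assume a color-$1$ edge $e$ lies on two regions. By Proposition \ref{P1} and determinism, $e$ is then the initial edge of a region $A$ and the terminal edge of a region $B$. Trace back how $B$ arises: either a segment $ba^n$ already ends at the initial vertex of $e$ in some $\Gamma_i(w)$, or an $a$-edge ending at the terminal vertex of $e$ belongs to an expandable $R$-word. Using the Munn tree structure, and the fact that new $a$-edges appear only inside sewn segments $ba^nb$ and so never extend the pivotal transversal, both situations force a second condition of the color-$1$ definition to hold at the other end of $b^r$, or at the same end in a compatible way. That is exactly the configuration the paper calls color $2$, contradicting the hypothesis that $e$ has color $1$. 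For the edges not covered by the count in (3a)/(3b), Proposition \ref{P4} already gives zero regions, so they are not color $1$.

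The main obstacle is this uniqueness step: regions generated at later stages, whose $a$-edges come from sewn segments $ba^nb$, might combine with original edges and create a second $ba^n$ ending at $e$. I would first try an induction on the generation $i$. The claim is that every $a$-edge created in $\Gamma_i(w)$ lies on a segment $ba^nb$ whose two end $b$-edges are new, so no new segment $ba^n$ ends at a vertex of $MT(w)$ except through configurations already listed in conditions (1)–(3). This would reduce the count of regions on $e$ to a count of conditions satisfied in $MT(w)$.
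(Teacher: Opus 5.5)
Your plan follows the paper's structure: existence by going through conditions (1)--(3) of the definition of color $1$, and uniqueness by arguing that a second region on an edge forces a second condition, which is the color $2$ situation. The existence half has a genuine gap, however. Under conditions (1) and (2) the definition makes \emph{every} edge of the maximal segment $b^r$ an edge of color $1$; the truncation to the first $s$ edges, or to the first edge, applies only under condition (3). Your items (1) and (2) only put a region on the first edge of $b^r$.

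The paper's proof relies on a propagation along $b^r$, one edge per generation. That is why it needs $r$ (respectively $r+1$) full $\mathscr{P}$-expansions. Write $b^r$ as $v_0\to v_1\to\cdots\to v_r$.
In case (1), sew $ba^nb$ along the $a$-edge leaving $v_0$ and fold its first $b$ onto $v_0\to v_1$. The first $a$ of the sewn path now leaves $v_1$, so $v_1$ is a new unsaturated special vertex, and condition (1) is reproduced for $v_1\to\cdots\to v_r$.
In case (2), the $a$-edge sewn as the short side of the relation ends at $v_1$. Together with the $a^n$ of the path sewn one generation earlier, it forms a new $a^{n+1}$ ending at $v_1$, so condition (2) is reproduced for $v_1\to\cdots\to v_r$.
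Without this induction you say nothing about the remaining $r-1$ edges.

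There is also a slip in (2): you must expand the \emph{first} $a$ of $a^{n+1}$, say $u_0\to u_1\to\cdots\to u_{n+1}=v$. The sewn $ba^nb$ then ends with a $b$-edge into $u_1$, and that edge followed by the remaining $a^n$ reads $ba^n$ into $v$. Expanding the last $a$ instead puts the new $b$-edges at $u_n$ and $v$, and produces no such segment.

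The same mechanism breaks the invariant you propose for uniqueness. It is not true that every new $a$-edge lies on a sewn $ba^nb$ with new end $b$-edges. The $a$-edge sewn when $ba^nb\to a$ is applied is a single edge ending at a vertex of $MT(w)$. Moreover, new segments $ba^n$ do end at interior vertices of $b^r$ that satisfy none of (1)--(3) in $MT(w)$; this is exactly the cascade above.

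A workable count tracks the role in which the cascade regions meet $b^r$. Under (1) at the initial vertex, each region has its edge as initial edge. Under (2) or (3) at the initial vertex, each has it as terminal edge; the roles are dual at the terminal vertex. Regions playing the same role on one $b$-edge fold together, as in Proposition \ref{P1}. Any condition at an end of $b^r$ puts that end on the unique pivotal transversal, and $MT(w)$ is a tree, so conditions can hold at only one end. Hence a second region on an edge needs (1) together with (2) or (3) at that end, which is color $2$. Conditions (2) and (3) together play the same role, which is the content of the Remark.
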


\begin{proof} Let $b^r$ be a maximal segment of $MT(w)$ that satisfy precisely one of the three conditions given in the above definition of edges of color $1$. The key fact towards the proof of this proposition is that, the maximal segment $b^r$ does not satisfy any two conditions simultaneously (except possibly condition $(2)$ and $(3)$ as remarked above), therefore, the edges of segment $b^r$ cannot be the common boundary edges of two regions of $S\Gamma(w)$.  Since $MT(w)$ contains only one pivotal transversal, the successive applications of full $\mathscr{P}$-expansion on $(\alpha_0,\Gamma_0(w),\beta_0)$ will not cause any folding of edges that can change the color of an edge labeled by $b$ of $MT(w)$ determined at the beginning.

It is easy to see that if the  segment $b^r$ satisfies only one of the two cases mentioned in condition $(1)$ of the definition of edges of color $1$, then $r$ number of successive applications of  full $\mathscr{P}$-expansions on  $(\alpha_0,\Gamma_0(w),\beta_0)$ will make  every edge of the maximal segment $b^r$ to be a boundary edge of one region of $S\Gamma(w)$.

 if the  segment $b^r$ satisfies exactly one of the two cases mentioned in condition $(2)$ of the definition of edges of color $1$, then $(r+1)$ number of successive applications of  full $\mathscr{P}$-expansions on  $(\alpha_0,\Gamma_0(w),\beta_0)$ will make  every edge of the maximal segment $b^r$ to be a boundary edge of one region of $S\Gamma(w)$.

 iI the  segment $b^r$ satisfies exclusively one of the two cases mentioned in condition $(3)$  of the definition of edges of color $1$, then there are three possibilities to discuss. Without loss of generality, we assume that there exists a segment labeled by $b^sa^n$ that terminates at the initial vertex fo the segment $b^r$.
 
 \begin{enumerate}
 
 \item If $n=1$ and $s< r$, then  $s$ number of successive applications of  full $\mathscr{P}$-expansions on  $(\alpha_0,\Gamma_0(w),\beta_0)$ will make  the first  $s$ number of edges of the maximal segment $b^r$ to be the boundary edges of $s$ district region of $S\Gamma(w)$. The remaining $(r-s)$ edges of the segment $b^r$ will be the edges of color $0$ (see the proof of Proposition \ref{P4}). 
 
 \item If $n=1$ and $s\geq r$, then $r$ number of successive applications of  full $\mathscr{P}$-expansions on  $(\alpha_0,\Gamma_0(w),\beta_0)$ will make  all the edges of the segment $b^r$ to be the boundary edges  $r$ distant regions of $S\Gamma(w)$.

 \item If $n>1$, then the first application full $\mathscr{P}$-expansion on  $(\alpha_0,\Gamma_0(w),\beta_0)$ will make  the first edge of the maximal segment $b^r$ to be a boundary edge  of a first generation region of $S\Gamma(w)$. Since $n>1$, there is no unsaturated segment labeled by $ba^nb$ that terminates at the second edge of the segment $b^r$ in $(\alpha_1,\Gamma_1(w),\beta_1)$. Also, the segment $b^r$ does not satisfy any other condition given in the definition of edges of color $1$. Therefore, the remaining $(r-1)$ edges of the segment $b^r$ cannot be the boundary edges of any region of $S\Gamma(w)$. So, the remaining $(r-1)$ edges of the segment $b^r$ are the edges of color $0$ (see proof of Proposition \ref{P4}).  
 \end{enumerate}  
\end{proof}

We conclude this section at the following Lemma. 

\begin{lemma}\label{L7} 
 If  $MT(w)$, for some $w\in (X\cup X^{-1})^*$, contains only one pivotal transversal, and $p$ denotes the maximum of the lengths of the segments of the form $b^r$,  for some $r\in\mathbb{N}$, that either start from or terminate at the pivotal transversal, then $(\alpha_{p+1},\Gamma_{p+1}(w),\beta_{p+1})$ over the presentation $\langle X|a=ba^nb\rangle$, does not contain any unsaturated special vertex. 
\end{lemma}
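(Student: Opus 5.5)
The plan is to track how unsaturated special vertices are created and destroyed along the sequence $(\alpha_i,\Gamma_i(w),\beta_i)$, using the fact (Proposition 3 of \cite{Paper 2}, recalled above) that each $\phi_i$ is an induced-subgraph embedding. Because $MT(w)$ has only one pivotal transversal, the same remark as in the proofs of Propositions \ref{P4} and \ref{P5} applies: no folding in later stages changes the colors of the $b$-edges of $MT(w)$.

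First I would locate the unsaturated special vertices of $(\alpha_0,\Gamma_0(w),\beta_0)$. A special vertex $\gamma$ needs an $a$-edge, and every $a$-edge of $MT(w)$ lies on the pivotal transversal. So $\gamma$ is a vertex of the pivotal transversal at which a $b$-edge starts (for $Star^O$) or ends (for $Star^I$). Such a $b$-edge lies on a maximal segment $b^r$ that starts from or terminates at the pivotal transversal, and by hypothesis $r\le p$.

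Next I would describe one round of full $\mathscr{P}$-expansion. Suppose $\gamma$ is an unsaturated special vertex with an outgoing $a$-edge $\gamma\to\delta$. Sewing $ba^nb$ from $\gamma$ to $\delta$ folds its first $b$ onto the existing $b$-edge $(\gamma,b,\gamma_1)$ of the segment $b^r$. This saturates $\gamma$ and creates a new pivotal-like segment $a^n$ starting at $\gamma_1$. The new vertex $\gamma_1$ may itself become an unsaturated special vertex: it has an outgoing $a$ and, if $r\ge 2$, an outgoing $b$. The new $a^n$ path ends in a $b$-edge to $\delta$, and that $b$ folds only with a $b$-edge already present at the end. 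The dual statement holds for $Star^I$.

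So each round advances the frontier of unsaturated special vertices by one edge along each segment $b^r$, exactly as in the counting of Proposition \ref{P5}, where $r$ or $r+1$ rounds saturate a segment $b^r$. I would prove by induction on $i$ the following claim. \emph{Every unsaturated special vertex of $(\alpha_i,\Gamma_i(w),\beta_i)$ is the vertex at distance $i$ along some segment $b^r$ (with $r>i$) from the pivotal transversal, read forward for out-stars and backward for in-stars.}

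The induction step needs two checks. First, the newly created $a^n$ segments and the new $b$-edges attached to them create no other special vertices. The interior vertices of a sewn $ba^nb$ have only $a$-edges in one direction plus at most the boundary $b$. Here Proposition \ref{P1} is used: a $b$-edge bounds at most two regions, so no further branching appears. Second, foldings cannot merge new material with distant parts of $MT(w)$. This uses the single pivotal transversal hypothesis together with Propositions \ref{P4} and \ref{P5}. In particular, color $0$ edges never acquire regions, so they never produce an $a$-edge at their endpoints.

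Once the claim holds, after $p$ rounds no segment has $r>p$, so no unsaturated special vertex is left at the frontier. The extra $(p+1)$-th round handles the condition $(2)$ case of color $1$ edges. There an $a^{n+1}$ segment meeting the end of $b^r$ produces, after one expansion, a $ba^n$ ending there, which costs one additional step, matching the ``$(r+1)$ applications'' in the proof of Proposition \ref{P5}.

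I expect the main obstacle to be the first check: ruling out new unsaturated special vertices at the endpoints of the sewn $a^n$ paths. This is most delicate when $n=1$, where the $a$-edge of a new region meets a $b$-edge of $b^r$ and could form fresh $Star^I=X$ vertices. I would first try to show that any such vertex has its $a$-edge already bounding the region just sewn. That would make the $a$-edge saturated by construction, which is exactly the case analysis $s<r$ versus $s\ge r$ in Proposition \ref{P4}.
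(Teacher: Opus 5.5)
Your frontier invariant is a sharper form of the paper's count. The paper sorts the $b$-edges of $MT(w)$ by colour and discards colour $0$ by Proposition \ref{P4}. It then notes that a colour-$1$ or colour-$2$ segment $b^r$ is completely saturated within $r+1$ rounds, and concludes that after $p+1$ rounds no $b$-edge of $MT(w)$ is left to support an unsaturated special vertex. This tacitly uses the fact, made explicit only in Lemma \ref{L4}, that $b$-edges created by expansions never sit at an unsaturated special vertex.

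As written, your proposal does not prove the lemma: the induction step, where all the content lies, is left open, and two things you lean on fail as stated. First, Proposition 3 of the earlier paper is recalled in the paper only for $w\in X^+$. For general $w$ the maps $\phi_i$ can identify vertices of $MT(w)$, and they do so through unsaturated special vertices, which is why Lemma \ref{P6} is needed. For example, take $n=1$ and $w=aa^{-1}bab$. The path $bab$ from $\alpha_0$ to $\beta_0$ forces a sewn $a$-edge. It folds with the $a$-edge already leaving $\alpha_0$, which identifies two vertices of $MT(w)$. So for a single pivotal transversal, injectivity is something your ``second check'' must prove; you cannot cite it.

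Second, your plan for the first check conflates the two sides of a region. An $a$-edge on the $ba^nb$-side of a freshly sewn region is a new, unsaturated occurrence of the $R$-word $a$. Only an $a$-edge sewn as the $a$-side of a region is saturated at birth, that is, one sewn onto an existing $ba^nb$-path as in conditions (2) and (3). The endpoints of sewn $a^n$-paths are of the first kind. Suppose $u_0\xrightarrow{a}u_1$ lies on the pivotal transversal and a segment $b^s$ with $s\ge 2$ terminates at $u_1$. Then the $a^n$ sewn on $(u_0,a,u_1)$ ends at the penultimate vertex of $b^s$, and that vertex is a genuine unsaturated special vertex of $\Gamma_1(w)$. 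Your invariant allows it, as the backward frontier, but it cannot be shown to be saturated.

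What the step actually needs is the analogue of Lemma \ref{L4}: every unsaturated special vertex of $\Gamma_i(w)$ uses a still-unsaturated $b$-edge of $MT(w)$. Propositions \ref{P4} and \ref{P5} then give the one-edge-per-round count.

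Finally, your last paragraph does not match your own invariant. If the invariant holds, $\Gamma_p(w)$ already has no unsaturated special vertex. The one-round delay in condition (2) concerns when $b$-edges acquire their regions, which is the paper's measure, not when unsaturated special vertices disappear. In that case the new $a$-edges are sewn onto $ba^nb$-paths and are saturated from the start.
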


\begin{proof} For a given word $w\in (X\cup X^{-1})^*$, such that $MT(w)$contains only one pivotal transversal, let $p\in \mathbb{N}$ be the maximum of the lengths of the segments $b^r$ that either start from or terminate at  a vertex of the pivotal transversal.  We identify the segments of color $0$, color $1$, and color $2$ in $MT(w)$, with respect to the presentation $\langle X|a=ba^nb\rangle$. 

By Proposition \ref{P4}, the edges of color $0$ cannot be the boundary edges of any region of $S\Gamma(w)$. Therefore, the segments consisting of the edges of color $0$ can be ignored.

 We focus our attention towards the segments of color $1$ and $2$. It follows from the definition of segments of color $1$ and $2$ that if $b^r$, for some $r>0$, is a segment of color $1$ or $2$, then every edge of this segment becomes completely saturated (that is, becomes a boundary edge of one or two regions) after at most $(r+1)$ successive applications of full $\mathscr{P}$-expansions starting from $(\alpha_0,\Gamma_0(w),\beta_0)$. Thus, after $(p+1)$ successive applications of full $\mathscr{P}$-expansions, we obtain the approximate graph $(\alpha_{p+1},\Gamma_{p+1}(w),\beta_{p+1})$ in which every edge of color $1$ and color $2$ is completely saturated, and there are no more unsaturated edges labeled by $b$ of any color left that can create any unsaturated special vertex. So, this approximate graph does not contain any unsaturated special vertex.  

\end{proof}

\section{Main theorem}

The following lemmas are needed for the proof of Theorem \ref{T2}. 

\begin{lemma}\label{P6} If  $(\alpha_i,\Gamma_i(w), \beta_i)$, for some $w\in (X\cup X^{-1})^*$ and $i\in \mathbb{N}$, does not contain any unsaturated special vertex, then $(\alpha_i,\Gamma_i(w), \beta_i)$ embeds into $(\alpha_{i+1},\Gamma_{i+1}(w), \beta_{i+1})$ as a bi-rooted inverse word graph, over the presentation $\langle X|a=ba^nb\rangle$.
\end{lemma}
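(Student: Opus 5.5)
The plan is to show that, when there is no unsaturated special vertex, the passage from $(\alpha_i,\Gamma_i(w),\beta_i)$ to $(\alpha_{i+1},\Gamma_{i+1}(w),\beta_{i+1})$ involves only foldings that never identify two distinct vertices of $\Gamma_i(w)$. Then the natural morphism $\phi_i$ is injective on vertices and edges, hence an embedding of birooted inverse word graphs; the roots are preserved because $\phi_i(\alpha_i)=\alpha_{i+1}$ and $\phi_i(\beta_i)=\beta_{i+1}$ by construction. This is the same strategy as Proposition 3 of \cite{Paper 2}. The Adian (cycle-free) hypothesis was used there to rule out foldings between newly sewn segments and the old graph. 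The single obstruction peculiar to $a=ba^nb$ is that the one-letter $R$-word $a$ shares no prefix or suffix letter with anything except itself, while $ba^nb$ begins and ends with $b$.

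First, classify the elementary $\mathscr{P}$-expansions applied in a full $\mathscr{P}$-expansion of $\Gamma_i(w)$. There are two kinds.
\begin{enumerate}
\item From an unsaturated segment labeled $ba^nb$ from $v_1$ to $v_2$, we sew a single edge $a$ from $v_1$ to $v_2$.
\item From an unsaturated edge $a$ from $v_1$ to $v_2$, we sew a linear path $ba^nb$ with $n+1$ new interior vertices.
\end{enumerate}
In both cases the sewn path is new except at its endpoints. A folding can therefore start only at $v_1$ or $v_2$, where the first or last letter of the sewn path meets an existing edge with the same label at the same end.

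Next, analyse these possible initial foldings.
\begin{enumerate}
\item If a sewn edge $a$ folds at $v_1$ with an existing out-edge $a$, then the segment $ba^nb$ was not unsaturated. The same holds at $v_2$, since $\Gamma_i(w)$ is deterministic: the other $a$-edge would end somewhere other than $v_2$, which forces $v_2$ to be identified with another vertex. This is the dangerous folding. It arises exactly when $v_1$ already has out-edges $b$ and $a$, i.e.\ $Star^O(v_1)=X$ with an unsaturated $a$-edge, so $v_1$ is an unsaturated special vertex. Dually, an existing in-edge $a$ at $v_2$ makes $v_2$ an unsaturated special vertex.
\item If a sewn path $ba^nb$ has its first $b$ fold with an existing out-edge $b$ at $v_1$, then $v_1$ carries out-edges $a$ (the unsaturated one) and $b$, so again $Star^O(v_1)=X$ with an unsaturated $a$-edge. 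Symmetrically for the final $b$ at $v_2$.
\end{enumerate}
So under the hypothesis, any folding started at an endpoint can only involve edges belonging to the newly sewn paths themselves. Two sewn paths from the same vertex with the same label can arise, for instance two regions sharing an initial $b$; by Proposition~\ref{P1} and determinism, these fold only into each other and never pull together two old vertices. I would argue by induction along the folding sequence that every identified pair contains at least one new vertex, using the fact that new vertices have degree 2 in the sewn path.

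The main obstacle is this last step. I must ensure that a cascade of foldings among new paths, for example two $ba^nb$ paths sewn from the same $v_1$ for different unsaturated $a$-edges ending at different $v_2, v_2'$, cannot eventually identify $v_2$ with $v_2'$. Two distinct $a$-edges leaving $v_1$ cannot exist by determinism, so two paths with a common start have a common end, and they coincide rather than identifying distinct old vertices. The subtle cases are mixed ones: a new path from $v_1$ whose interior $a$-edges meet a new path sewn at another vertex. I would first try to handle these by noting that interior vertices of a sewn $ba^nb$ have $Star$ sets $\{a\}$ or $\{b,a\}$ only in the direction of the path, so any merging reproduces one of the special-vertex configurations above. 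Once injectivity is established, the induced-subgraph property follows as in \cite{Paper 2}: no new edges are added between old vertices except the sewn $a$-edges of type 1, which are exactly the images required of the expansion.
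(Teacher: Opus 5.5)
Your strategy is the paper's: the hypothesis forbids any folding at the endpoints of the sewn paths, so determinization identifies nothing and the natural map $\Gamma_i(w)\to\Gamma_{i+1}(w)$ is injective and root-preserving. The paper says exactly this in two lines. As written, however, your proof is not finished. You leave the new--new foldings open as the ``main obstacle'' and sketch an induction along the folding sequence plus a Star-set analysis of interior vertices.

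None of that is needed, and your own observation that a sewn path is new except at its endpoints already settles it. Every interior vertex of a sewn path has exactly two edges leaving it (counting inverse edges), with distinct labels in $X\cup X^{-1}$. Old vertices other than endpoints are untouched. So if no two edges with the same label leave a common endpoint, the graph obtained after sewing is already deterministic, the folding sequence is empty, and there is no cascade to control.

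At an endpoint $v$, new--new coincidences are ruled out by determinism of $\Gamma_i(w)$ alone. Two sewn $a$-edges leaving (entering) $v$ would come from two segments labeled $ba^nb$ starting (ending) at $v$, and these coincide. Two sewn paths $ba^nb$ leaving (entering) $v$ would require two $a$-edges leaving (entering) $v$. Consequently:
\begin{enumerate}
\item your remark that two sewn paths from one vertex with the same label ``can arise'' is wrong;
\item the common-end case, which you omit, is dual to the common-start case;
\item Proposition~\ref{P1}, which concerns the closed graph $S\Gamma(w)$, plays no role.
\end{enumerate}

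Two smaller points. In the old--new case for a sewn $a$-edge at $v_1$, you assert without proof that the pre-existing $a$-edge $v_1\to v_3$ is unsaturated. Add the line: if it were saturated, the unique path labeled $ba^nb$ from $v_1$ would end at $v_3$, so $v_3=v_2$ and the segment you are expanding would already be saturated. (The paper skips this too.) Also drop the closing claim of an induced-subgraph embedding. The lemma only asserts an embedding, and you yourself note that a type-1 sewn $a$-edge joins two old vertices, which is exactly what makes the image non-induced.
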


\begin{proof} We assume $(\alpha_i,\Gamma_i(w),$  $ \beta_i)$, for some $w\in (X\cup X^{-1})^*$, and $i\in\mathbb{N}$ does not contain any unsaturated special vertex. This assumption leads to the fact, the initial and terminal vertices of the unsaturated segment labeled by the $R$-words (if there are any) of $(\alpha_i,\Gamma_i(w), \beta_i)$ are not the unsaturated special vertices. So, when we apply elementary $\mathscr{P}$-expansions on these unsaturated segments labeled by the $R$-words, we cannot perform any folding. 

Therefore, the natural graph morphism from $(\alpha_i,\Gamma_i(w), \beta_i)$ to $(\alpha_{i+1},\Gamma_{i+1}$ $(w), \beta_{i+1})$ is an embedding, because  no two distict edges or vertices of $(\alpha_i,\Gamma_i(w), \beta_i)$ get identified with each other in $(\alpha_{i+1},$ $\Gamma_{i+1}(w),$ $ \beta_{i+1})$.

\end{proof}

It can be deduced from Lemma \ref{P6} that if for some $w\in (X\cup X^{-1})$, $i$ happens to be the least number such that $(\alpha_i,\Gamma_i(w), \beta_i)$ does not contain any unsaturated special vertex, then $(\alpha_j,\Gamma_j(w), \beta_j)$ embeds into $(\alpha_{j+1},\Gamma_{j+1}(w), \beta_{j+1})$ for all $j\geq i$. 

\begin{lemma}\label{P7}  If $(\alpha_i,\Gamma_i(w), \beta_i)$, for some $w\in (X\cup X^{-1})^*$ and $i\in \mathbb{N}$, does not contain any unsaturated special vertex, then $(\alpha_{i+1},\Gamma_{i+1}(w), \beta_{i+1})$ does not contain any unsaturated special vertex either, over the presentation $\langle X|a=ba^nb\rangle$.
\end{lemma}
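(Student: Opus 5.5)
The plan is to use Lemma \ref{P6} to describe $(\alpha_{i+1},\Gamma_{i+1}(w),\beta_{i+1})$ concretely, and then check every vertex of it. Since $(\alpha_i,\Gamma_i(w),\beta_i)$ has no unsaturated special vertex, Lemma \ref{P6} says that $\Gamma_{i+1}(w)$ is $\Gamma_i(w)$ with, for each unsaturated segment labeled by an $R$-word, a new linear path labeled by the other side of the relation attached between its endpoints. No folding takes place. So the vertices of $\Gamma_{i+1}(w)$ are of two kinds: old vertices, which lie in $\Gamma_i(w)$, and new vertices, which are interior to the sewn paths. The new paths are of two types: paths labeled by $ba^nb$ sewn along an unsaturated edge labeled $a$, and single edges labeled by $a$ sewn along an unsaturated segment labeled $ba^nb$. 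I will treat the two kinds of vertices separately.

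\emph{New vertices.} A single sewn edge labeled $a$ creates no new vertices. A sewn path $ba^nb$ from $v_1$ to $v_2$ creates interior vertices $\delta_1,\dots,\delta_{n+1}$. Each of these has in-degree and out-degree one in $\Gamma_{i+1}(w)$, and the two incident edges come from consecutive letters of $ba^nb$. At $\delta_1$ the out-edge is $a$ and the in-edge is $b$, so $Star^O(\delta_1)=\{a\}$ and $Star^I(\delta_1)=\{b\}$. At the middle vertices only $a$ occurs. At $\delta_{n+1}$ the in-edge is $a$ and the out-edge is $b$. In no case is $Star^O$ or $Star^I$ equal to $X$, so no new vertex is special, and in particular none is unsaturated special.

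\emph{Old vertices.} Let $\gamma$ be an old vertex. There are two cases.
\begin{enumerate}
\item $\gamma$ was special in $\Gamma_i(w)$. By hypothesis it was saturated, meaning that the relevant edge labeled $a$ at $\gamma$ already bounded a region, that is, $ba^nb$ was readable in parallel with it. Since the morphism into $\Gamma_{i+1}(w)$ is an embedding, this region survives, so $\gamma$ is still saturated.
\item $\gamma$ was not special in $\Gamma_i(w)$. It can only become special if a sewn path adds a missing letter to its out-star or in-star. A sewn $ba^nb$ starting at $\gamma$ adds a $b$ out-edge, but only because an unsaturated $a$ out-edge already starts at $\gamma$; that $a$-edge is now saturated by the very path just sewn. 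The dual statement holds for a sewn path ending at $\gamma$. A sewn edge $a$ starting at $\gamma$ requires that an unsaturated $ba^nb$ already starts at $\gamma$, so $b$ was already present; the new $a$-edge is sewn together with the region $a=ba^nb$, so it is saturated. The terminal end is dual.
\end{enumerate}
In every case, any newly created special vertex has its $a$-edge saturated in $\Gamma_{i+1}(w)$.

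\emph{Main obstacle.} The difficulty is old vertices where several sewings meet. Suppose, for example, that a $b$ out-edge arrives at $\gamma$ from one sewing and an $a$ out-edge from another. Then $\gamma$ would have two out-edges with the same label, and determinization would force a folding, contradicting the absence of folding guaranteed by Lemma \ref{P6}. So at most one out-edge of each label exists at $\gamma$, and the $a$-edge there is exactly the one whose saturation was checked above. I would write this step out carefully, using the determinism of $\Gamma_{i+1}(w)$ and Proposition \ref{P1} to rule out an $a$-edge that is special-adjacent but unsaturated.
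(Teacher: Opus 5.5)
Your proposal is correct and follows the paper's proof. Both invoke Lemma~\ref{P6} to get an embedding with no folding, observe that the interior vertices of the sewn $ba^nb$ paths have one-element out- and in-stars and so are not special, and use the hypothesis for the old vertices. Your extra check, that an old vertex can only become special in $\Gamma_{i+1}(w)$ through a sewing that simultaneously saturates its $a$-edge, fills in what the paper compresses into ``because of our hypothesis''. Run that check separately for $Star^O$ and $Star^I$: a vertex that is special on one side in $\Gamma_i(w)$ can become special on the other side, and your case 1 as stated does not cover this. Proposition~\ref{P1} is not needed.
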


\begin{proof} We assume that there exists a positive integer $n$ such that $(\alpha_n,\Gamma_n(w),$  $ \beta_n)$, for some $w\in (X\cup X^{-1})^*$, does not contain any unsaturated special vertex. By Lemma \ref{P6},  $(\alpha_n,\Gamma_n(w), \beta_n)$ embeds into $(\alpha_{n+1},\Gamma_{n+1}(w), \beta_{n+1})$. 

Let $\delta$ denotes a vertex of $(\alpha_{n+1},\Gamma_{n+1}(w), \beta_{n+1})$.  There are two possibilities to discuss.

\begin{enumerate}
\item If  $\delta\in V\{(\alpha_{n+1},\Gamma_{n+1}(w), \beta_{n+1})\setminus (\alpha_n,\Gamma_n(w), \beta_n)\}$, then $Star^O(\delta)$ and $Star^I(\delta)$ contain only one element. 
So, $\delta$ cannot be a special vertex. 

\item If $\delta$ denotes the image of a vertex of $(\alpha_n,\Gamma_n(w), \beta_n)$ in $(\alpha_{n+1},$ $\Gamma_{n+1}(w),$ $ \beta_{n+1})$, then $\delta$ cannot be an unsaturated special vertex, because of our hypothesis. 

\end{enumerate}

\end{proof}

Lemma \ref{P7} leads us to the conclusion  if for some $w\in (X\cup X^{-1})$,  $i$ happens to be the least positive integer such that $(\alpha_i,\Gamma_i(w), \beta_i)$ does not contain any unsaturated special vertex, then $(\alpha_j,\Gamma_j(w), \beta_j)$ does not contain any unsaturated special vertex for all $j\geq i$.

In an approximate graph $(\alpha_k,\Gamma_k(w),\beta_k)$ of $S\Gamma(w)$, for some $w\in (X\cup X^{-1})^*$, over the presentation $Inv\langle X|a=ba^nb\rangle$, an unsaturated special vertex $\delta$ involve  an unsaturated edge labeled by $a$ and an unsaturated edge labeled by $b$ such that either both of these edges start from $\delta$ or both terminate at $\delta$. 

The edges labeled by $b$ of $S\Gamma(w)$, for some $w\in(X\cup X^{-1})^*$, can be distributed in two classes.

\begin{enumerate}

\item The first class consists of those edges labeled by $b$ of $S\Gamma(w)$ that are the images of $MT(w)$ under the natural graph morphism.

\item While the second class consists of those edges labeled by $b$ of $S\Gamma(w)$, that appear as a consequence of the successive applications of full $\mathscr{P}$-expansions.

\end{enumerate}
The next lemma proves that the edges in the second class mentioned above do not create the unsaturated special vertices in an approximate graph of $S\Gamma(w)$.

\begin{lemma}\label{L4} For some $w\in (X\cup X^{-1})^*$ and $k\in\mathbb{N}$, let $e$ be an edge labeled by $b$ such that $e\in E((\alpha_k,\Gamma_k(w),\beta_k)\setminus (\alpha_{k-1},\Gamma_{k-1}(w),\beta_{k-1}))$, over the presentation $\langle X|a=ba^nb\rangle$. Then the initial and terminal vertices of $e$ are not the unsaturated special vertices of $(\alpha_k,\Gamma_k(w),\beta_k)$.

\end{lemma}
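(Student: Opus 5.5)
The plan is to look directly at where a new edge labeled by $b$ can come from, and then at what lies around its endpoints. Since $e$ lies in $(\alpha_k,\Gamma_k(w),\beta_k)$ but not in $(\alpha_{k-1},\Gamma_{k-1}(w),\beta_{k-1})$, it was created by the full $\mathscr{P}$-expansion of step $k$. In this presentation the only sewn-on segments containing $b$ are copies of $ba^nb$, sewn along an unsaturated edge labeled $a$ from some $v_1$ to $v_2$. (Sewing $a$ along an unsaturated $ba^nb$ creates no $b$-edge.) So $e$ is the first or the last edge of a freshly sewn segment $ba^nb$, and that segment bounds a new $k$-th generation region together with the edge $a$ from $v_1$ to $v_2$. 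By symmetry I will treat $e$ as the initial edge; the terminal case is the dual argument with in-stars and out-stars exchanged.

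I will split according to the two endpoints of $e$.

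\textbf{The endpoint other than $v_1$.} This is the vertex between $e$ and the sewn $a^n$. Unless it was folded onto an old vertex, it is a new vertex, and as in case (1) of the proof of Lemma \ref{P7} its out-star and in-star each contain a single letter. Hence it is not special. If it was folded, then the folding identified $e$ with an existing $b$-edge, contradicting $e\notin E(\Gamma_{k-1})$. This settles the new endpoint.

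\textbf{The endpoint $v_1$.} It is an old vertex, and it has the outgoing $a$-edge $f$ to $v_2$ and the outgoing edge $e$, so $Star^O(v_1)=X$ and $v_1$ is special. The task is to show it is saturated. By definition, an unsaturated special vertex would need its $a$-edge unsaturated. But $f$ now bounds the region whose other side is $e\,a^n\,b$, so $f$ is saturated as an edge starting at $v_1$. If $v_1$ also has an incoming $a$ and incoming $b$, then any unsaturated incoming $a$-edge at $v_1$ is handled in the same step of the full $\mathscr{P}$-expansion, since all elementary expansions relative to $\Gamma_{k-1}$ are applied simultaneously. Its terminal $b$ then becomes the terminal edge of a new segment ending at $v_1$, and that edge is saturated. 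The terminal endpoint of a new terminal $b$-edge is treated the same way.

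The main obstacle I expect is making "saturated" precise once foldings enter. After determinization, the sewn $b$-edge may fold with a previously existing $b$-edge, or two newly sewn segments may fold together. I would use Proposition \ref{P1} (each $b$-edge bounds at most two regions) together with determinism. These show that if $e$ survives as a genuinely new edge, no folding merged it with an old edge. Any remaining identification only merges regions, and merging preserves the fact that the $a$-edges at its endpoints are saturated. Writing out this folding case analysis cleanly is where the real care is needed.
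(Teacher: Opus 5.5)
\textbf{The gap: the terminal vertex of $e$.} You claim that if the vertex between $e$ and the sewn $a^n$ was folded onto an old vertex, then $e$ itself was identified with an old $b$-edge. This is false. Folding can reach that vertex from the other end of the sewn copy of $ba^nb$. Suppose $\Gamma_{k-1}$ already contains a path labeled $a^nb$ ending at $v_2$. Then the last sewn $b$ folds onto its final edge, the sewn $a^n$ folds backward onto its $a^n$, and the vertex after $e$ becomes an old vertex $z$, while $e$ can remain new.

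This is exactly the paper's case (2). There a preexisting $a^{n+1}b$ ends at $v_2$, so $z$ has an old incoming $a$ and $Star^I(z)=X$. The paper concludes that $z$ is saturated because the full $\mathscr{P}$-expansion saturates every $a$-edge of $\Gamma_{k-1}$. Your argument never examines this configuration.

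The same hidden assumption sits in your treatment of $Star^I(v_1)$. ``Handled in the same step'' applies only to $a$-edges already present in $\Gamma_{k-1}$. It does not apply to an $a$-edge from the interior of another sewn copy that folding attaches to $v_1$.

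\textbf{This gap cannot be closed.} The incoming $a$-edge at the folded vertex may itself be new, and then the statement fails. Take $n=1$, $w=ab^{-1}aba^{-1}$, $k=1$.
\begin{itemize}
\item $MT(w)$ has vertices $\gamma_0=\alpha_0,\gamma_1,\dots,\gamma_5=\beta_0$ and edges $(\gamma_0,a,\gamma_1)$, $(\gamma_2,b,\gamma_1)$, $(\gamma_2,a,\gamma_3)$, $(\gamma_3,b,\gamma_4)$, $(\gamma_5,a,\gamma_4)$. All three $a$-edges are unsaturated, and no path is labeled $bab$.
\item The copy of $bab$ sewn along $(\gamma_5,a,\gamma_4)$ folds backward onto $(\gamma_2,a,\gamma_3)(\gamma_3,b,\gamma_4)$. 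This leaves the new edge $e=(\gamma_5,b,\gamma_2)$.
\item The copy $(\gamma_0,b,q_0)(q_0,a,q_1)(q_1,b,\gamma_1)$ sewn along $(\gamma_0,a,\gamma_1)$ folds at $\gamma_1$ against $(\gamma_2,b,\gamma_1)$. So $q_1=\gamma_2$, and $\gamma_2$ acquires the new incoming edge $(q_0,a,\gamma_2)$.
\item The copy along $(\gamma_2,a,\gamma_3)$ only folds its first edge onto $(\gamma_2,b,\gamma_1)$, and no two old vertices are merged.
\end{itemize}
In $\Gamma_1(w)$ we have $Star^I(\gamma_2)=X$. The edge $(q_0,a,\gamma_2)$ is unsaturated, since $q_0$ has no outgoing $b$. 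So the terminal vertex of the new edge $e$ is an unsaturated special vertex, contradicting the lemma. The paper's case (2) does not cover this, because $\gamma_2$ has no incoming $a$ in $MT(w)$.

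For $n\ge 2$, the word $w=baba^{-1}$ similarly makes the \emph{initial} vertex of a new $b$-edge an unsaturated special vertex through $Star^I$. That is precisely the situation your $v_1$ argument claims to handle.

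What survives of both your argument and the paper's is only the weaker fact that every $a$-edge with a preimage in $\Gamma_{k-1}$ is saturated in $\Gamma_k$. Unsaturated special vertices can still arise through newly created $a$-edges, as at $\gamma_2$.
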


\begin{proof} Since $e\in E((\alpha_k,\Gamma_k(w),\beta_k)\setminus (\alpha_{k-1},\Gamma_{k-1}(w),\beta_{k-1}))$, the edge $e$ appears as a consequence of sewing on a segment labeled by $ba^nb$ from the initial vertex to the terminal vertex of an unsaturated segment labeled by $a$ of $(\alpha_{k-1},\Gamma_{k-1}(w),\beta_{k-1})$. Without loss of generality, we assume that $e$ is the first edge of this newly attached segment, and we show that the initial and the terminal vertices of $e$ cannot be the unsaturated special vertices of $(\alpha_k,\Gamma_k(w),\beta_k)$.

The initial vertex of $e$ cannot be an unsaturated special vertex of  $(\alpha_k,$ $\Gamma_k(w),$ $ \beta_k)$. Because, if the initial vertex of $e$ happens to be an unsaturated special vertex of  $(\alpha_k,\Gamma_k(w),\beta_k)$, then this is only possible when there are two distinct edges labeled by $a$, with same initial vertex in $(\alpha_{k-1},\Gamma_{k-1}(w),\beta_{k-1})$. This leads to the contradiction that $(\alpha_{k-1},\Gamma_{k-1}(w),\beta_{k-1})$ is not determinized.

The terminal vertex of $e$ cannot be an unsaturated special vertex of  $(\alpha_k,\Gamma_k(w),\beta_k)$. Because, the terminal vertex of $e$ can be a special vertex of  $(\alpha_k,\Gamma_k(w),\beta_k)$ in two possible ways.

\begin{enumerate}
\item  If there is a segment labeled by $ba^{-1}$ that starts from the initial vertex of $e$, in $(\alpha_{k-1},\Gamma_{k-1}(w),\beta_{k-1})$,  then the edge $e$ gets identified with the initial edge of the preexisting segment $ba^{-1}$, and creates a special vertex at the terminal vertex of $e$. But, in this case there was already a special vertex on the segment $ba^{-1}$ that did not appear as a consequence of sewing on the segment $ba^nb$. So, when we apply full $\mathscr{P}$-expansion on $(\alpha_{k-1},\Gamma_{k-1}(w),\beta_{k-1})$, the terminal vertex of $e$ becomes a saturated special vertex. 

\item  If there is a segment labeled by $b^{-1}a^{-n-1}$ starting from the terminal vertex of the newly attached segment  $ba^nb$, in $(\alpha_{k-1},\Gamma_{k-1}(w),\beta_{k-1})$, the segments $b^{-1}a^{-n-1}$ starting from the terminal vertex of the newly attached segment $ba^nb$ get identified with the subsegment labeled by $a^nb$ of the newly attached segment and creates a special vertex at the terminal vertex of $e$. when we apply full $\mathscr{P}$-expansion on $(\alpha_{k-1},\Gamma_{k-1}(w),\beta_{k-1})$, every edge labeled by $a$ of the preexisting segment $b^{-1}a^{-n-1}$, becomes saturated. After performing all possible foldings, the terminal vertex of $e$ becomes s saturated special vertex. 

\end{enumerate}

If $e$ happens to be the last edge of the newly attached segment $ba^nb$, then a dual argument to the above can be presented to show that the initial and terminal vertices of $e$ cannot be the unsaturated special vertices of $(\alpha_k,\Gamma_k(w),\beta_k)$.

\end{proof}

It follows from Lemma \ref{L4}, an unsaturated special vertex of an approximate graph $(\alpha_k,\Gamma_k(w),\beta_k)$ of $S\Gamma(w)$, for some $w\in (X\cup X^{-1})^*$,  does not involve an edge labeled by $b$ from the second class. It means that an unsaturated special vertex always involves an edge labeled by $b$ that belongs to the first class.  This means that if $k$ is chosen large enough so that all the edges labeled by $b$ in the first class are completely saturated in $(\alpha_k,\Gamma_k(w),\beta_k)$,  then there will be no unsaturated special vertices in $(\alpha_k,\Gamma_k(w),\beta_k)$. The next Lemma suggests an appropriate choice of $k$ for which $(\alpha_k,\Gamma_k(w),\beta_k)$ does not contain any unsaturated special vertex.

We call a directed segment labeled by $b^r$, for some $r>0$, that connects two pivotal transversals, to be a \textit{connecting segment}, and the terminal vertex of a connecting segment to be a \textit{cut vertex} of $MT(w)$, for some $w\in (X\cup X^{-1})^*$.  

\begin{lemma}\label{L5}  For any $w\in (X\cup X^{-1})^*$, let $m=p+1$, where $p$ is the maximum of the lengths of the directed segments that are labeled by $b^r$,  for some $r\in\mathbb{N}$, oriented in any direction,, of $MT(w)$. Then $(\alpha_m,\Gamma_m(w),\beta_m)$ does not contain any unsaturated special vertex.

\end{lemma}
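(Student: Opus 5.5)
We reduce to the first-class edges, using the remark after Lemma \ref{L4}. By Lemma \ref{L4}, an unsaturated special vertex of any approximate graph $(\alpha_k,\Gamma_k(w),\beta_k)$ must involve an unsaturated edge labeled by $b$ that is the image of an edge of $MT(w)$. It therefore suffices to show that after $m=p+1$ applications of full $\mathscr{P}$-expansion every edge labeled by $b$ coming from $MT(w)$ is either completely saturated or can never lie on the boundary of a region. In the second case it cannot take part in an unsaturated special vertex.

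We decompose $MT(w)$ along its pivotal transversals. Cutting $MT(w)$ at the cut vertices, that is, at the terminal vertices of the connecting segments, gives finitely many subtrees. Each subtree contains exactly one pivotal transversal together with the maximal $b$-segments hanging off it. To each such piece we apply the coloring of Section 3 and Propositions \ref{P4} and \ref{P5}. Lemma \ref{L7} then shows that every edge of color $1$ or $2$ on a segment $b^r$ is saturated within $r+1\le p+1$ full $\mathscr{P}$-expansions, while edges of color $0$ are never boundary edges.

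The main obstacle is the connecting segments. A connecting segment $b^r$ can receive regions from both ends: a segment $a^{n+1}$ or $b^sa^n$ may end at its initial vertex on one transversal, and a segment $a^{n+1}$ or $a^nb^s$ may start at its terminal vertex on the other. The coloring argument is explicitly not effective when there is more than one transversal. Here we would use Proposition \ref{P1}: each edge of the connecting segment lies on at most two regions, at most one entering from each end. Saturation then propagates edge by edge from the two ends, as in the proofs of Propositions \ref{P4} and \ref{P5}, and one extra step covers the case where an $a^{n+1}$-segment first has to create the relevant $ba^n$. So every edge of a connecting segment of length $r\le p$ is saturated after at most $r+1\le m$ steps.

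The point needing care is whether the regions attached at one transversal can fold, through the cut vertex, with regions attached at another, and so change the count. We would argue that such folding involves only edges labeled by $a$ and newly sewn $b$-edges. By Lemma \ref{L4} these new $b$-edges never create unsaturated special vertices, so any folding leaves the first-class $b$-segments with length at most $p$. Finally, the property of having no unsaturated special vertex persists for all larger indices by Lemma \ref{P7}. Hence $(\alpha_m,\Gamma_m(w),\beta_m)$ contains no unsaturated special vertex.
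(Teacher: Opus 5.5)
Your reduction to first-class $b$-edges via Lemma \ref{L4} and your use of Lemma \ref{L7} on each fragment match the paper. For several transversals, however, you replace the paper's construction with a direct step count inside $\Gamma_k(w)$, and the claims that count rests on are false.

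\textbf{The folding claim.} Your last paragraph says that folding across a cut vertex involves only $a$-edges and newly sewn $b$-edges, and so leaves first-class $b$-segments of length at most $p$. Lemma \ref{L4} gives nothing of the kind, and the conclusion fails because folding, even of $a$-edges, identifies vertices of $MT(w)$. Take $n=1$ and $w=abb^{-1}a^{-1}bab$. Then $MT(w)$ has edges $v_0\xrightarrow{a}v_1$, $v_1\xrightarrow{b}v_2$, $v_0\xrightarrow{b}v_3$, $v_3\xrightarrow{a}v_4$, $v_4\xrightarrow{b}v_5$, so $p=1$. In $\Gamma_1(w)$ the $bab$-path $v_0v_3v_4v_5$ forces an $a$-edge $v_0\to v_5$, which folds with $v_0\xrightarrow{a}v_1$. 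Hence $v_1=v_5$, and $v_4\to v_1\to v_2$ is a first-class $b$-path of length $2>p$.

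\textbf{The transfer of Proposition \ref{P4}.} Proposition \ref{P4}, applied to one fragment, does not show that a color-$0$ edge stays off region boundaries in $S\Gamma(w)$. Append one more $a$, giving $w=abb^{-1}a^{-1}baba$ with a third transversal $v_5\xrightarrow{a}v_6$. The edge $v_1\to v_2$ has color $0$ in its fragment. Yet once $v_1=v_5$ in $\Gamma_1(w)$, the region sewn on $v_5\xrightarrow{a}v_6$ folds its first $b$ onto $v_1\to v_2$.

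\textbf{The reading of Proposition \ref{P1}.} Proposition \ref{P1} bounds the regions on a $b$-edge by type (initial versus terminal $b$ of the side $ba^nb$), not by the end of the segment they come from. Take $n=1$ and $w=aa^{-1}bbabb$, so $MT(w)$ has edges $u_0\xrightarrow{a}z$ and $u_0\xrightarrow{b}u_1\xrightarrow{b}u_2\xrightarrow{a}u_3\xrightarrow{b}u_4\xrightarrow{b}u_5$. The edge $u_0\to u_1$ gets an initial-type region from the left end, namely the one saturating $u_0\to z$. It gets another from the right end, namely the one saturating the $a$-edge $u_0\to u_5$ forced by the $bab$-paths. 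These two regions must coincide, which forces $z=u_5$. So saturation does not propagate independently from the two ends, and that independence is exactly what your bound $r+1\le m$ assumes.

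In these small examples the conclusion of the lemma still holds, so they do not refute the statement. They do show your argument does not establish it. What is missing is a mechanism that makes cross-fragment identifications harmless. The paper supplies one by never counting steps through such foldings:
\begin{enumerate}
\item Each fragment $\Pi_i$ is expanded separately to $\Delta_i$, which already has no unsaturated special vertex by Lemmas \ref{L7} and \ref{P7}.
\item The $\Delta_i$ are glued at the cut vertices and folded to $\Theta$. On each connecting segment, everything beyond the first (or last) unsaturated special vertex is absorbed into an already saturated segment of some $\Delta_i$.
\item The regions through the cut vertices are then added by expansions which, as in Lemma \ref{P7}, create no new unsaturated special vertex.
\end{enumerate}
To repair your route you need either an analogue of this pre-saturation and gluing, or a real analysis showing that every identification of vertices of $MT(w)$ happens early enough that the special vertices it creates are resolved by step $m$.
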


\textbf{\textit{Idea of the proof:}}  For an arbitrary word $w\in (X\cup X^{-1})^*$, the Stephen's process of full $\mathscr{P}$-expansion does not guarantee the non-existence of unsaturated special vertices in an approximate graph. So, we adopt a slightly different approach for constructing $(\alpha_m,\Gamma_m(w),\beta_m)$ that ensures that $\Gamma_m(w)$ does not contain any unsaturated special vertex. 

We start from $MT(w)$. We distribute $MT(w)$ into a finite number of fragments, such that each fragment contains only one pivotal transversal. We obtain a finite approximate graph from each fragment by applying full $\mathscr{P}$-expansion, successively (as shown in section 3) that does not contain any unsaturated special vertex. We further expand these finite approximate graphs to the ones that contain all of their $m$-generation regions by applying full $\mathscr{P}$-expansion, successively. We connect all these finite approximate graphs at the same vertices of their pivotal transversals from where they were removed, and perform all possible foldings.  We show that this new graph does not contain any unsaturated special vertex. 

It is possible that this new graph may not contain all the regions of $\Gamma_m(w)$. So, to include all the remaining regions up to the $m$-th generation in this graph we apply an iterative process (similar to the full $\mathscr{P}$-expansion) on this graph. We show that no new unsaturated special vertex gets created at any step of this iterative process.

\begin{proof} If $MT(w)$, for some $w\in (X\cup X^{-1})^*$, does not contain any pivotal transversal, then the above statement holds true vacuously.  

If $MT(w)$, for some $w\in (X\cup X^{-1})^*$, contains only one pivotal transversal, then the above statement follows from Lemma \ref{L7}, and Lemma \ref{P7}. 

If $MT(w)$, for some $w\in (X\cup X^{-1})^*$, contains more than one pivotal transversal, then the proof of the above statement can be given as follows. 

For a given word $w\in (X\cup X^{-1})^*$, we construct $MT(w)$. The graph $MT(w)$ can contain only a finite number of pivotal transversals, as $w$ is a word of finite length. We assume that there are $q$ number of pivotal transversals in $MT(w)$, for some $q\in\mathbb{N}$. Therefore, there are $(q-1)$ connecting segments and cut vertices in $MT(w)$. 

 \begin{figure}[h!]
\centering
\includegraphics[trim = 0mm 0mm 0mm 0mm, clip,width=1.7in]{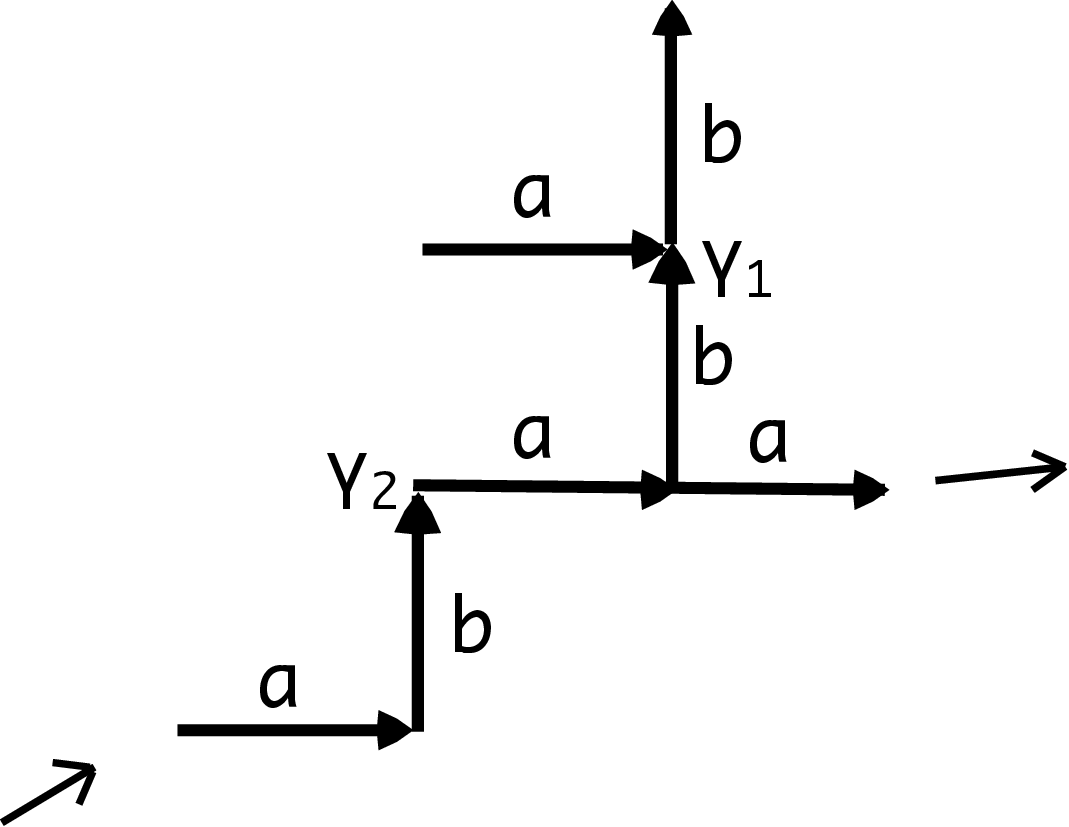}
\caption{$MT(w1)$ with three pivotal transversals and $p=2$, for some $w\in (X\cup X^{-1})^*$}
\label{5b}
\end{figure}

It is possible that some pivotal transversals may not contain any cut vertex, while some other pivotal transversals may contain more than one cut vertex. We label these cut vertices of $MT(w)$ by $\gamma_1,\gamma_2, ...,\gamma_{(q-1)}$ (see figure $3$). We distribute $MT(w)$ into $q$ fragments by cutting $MT(w)$ at its cut vertices. Each of these fragments contain only one pivotal transversal.   We denote these fragments by $\Pi_1, \Pi_2, ...,\Pi_q$ (see figure $4$). 

 \begin{figure}[h!]
\centering
\includegraphics[trim = 0mm 0mm 0mm 0mm, clip,width=1.8in]{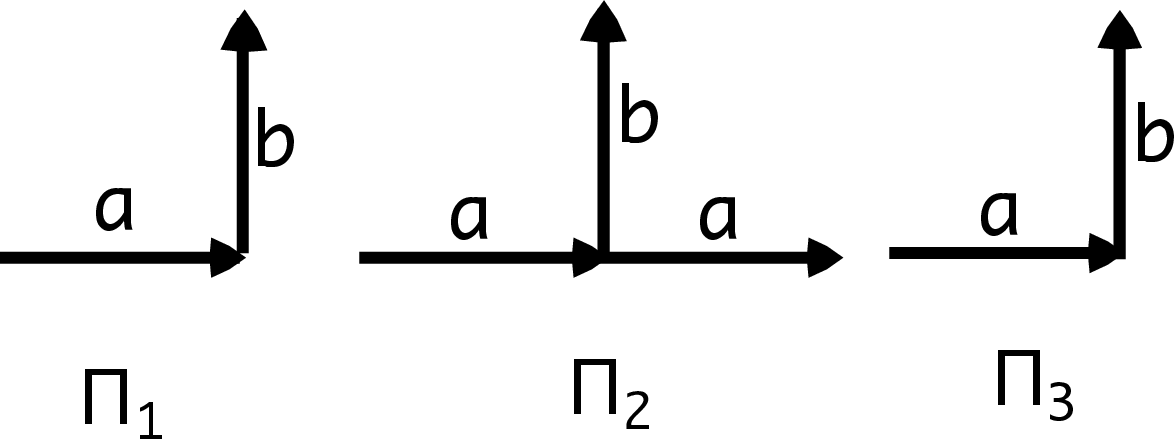}
\caption{$MT(w)$ distributed into three fragments}
\label{5b}
\end{figure}

We assume that $p_1,p_2, ..., p_q$ be the maximums of the lengths of the directed segments of the form $b^r$, oriented in any direction, of the fragments $\Pi_1, \Pi_2, ...,\Pi_q$, respectively. We apply full $\mathscr{P}$-expansions on each $\Pi_i$ successively up to  $(p_i+1)$-times, and denote the corresponding resulting graphs by $\Lambda_1, \Lambda_2, ...,\Lambda_q$, respectively (see figure $5$). 
By Lemma \ref{L7}, $\Lambda_i$'s do not contain any unsaturated special vertex, which means that every edge labeled by $b$ of $\Pi_i$ is completely saturated in $\Lambda_i$ (relative to $\Pi_i$),  for all $1\leq i\leq q$. 

 \begin{figure}[h!]
\centering
\includegraphics[trim = 0mm 0mm 0mm 0mm, clip,width=1.8in]{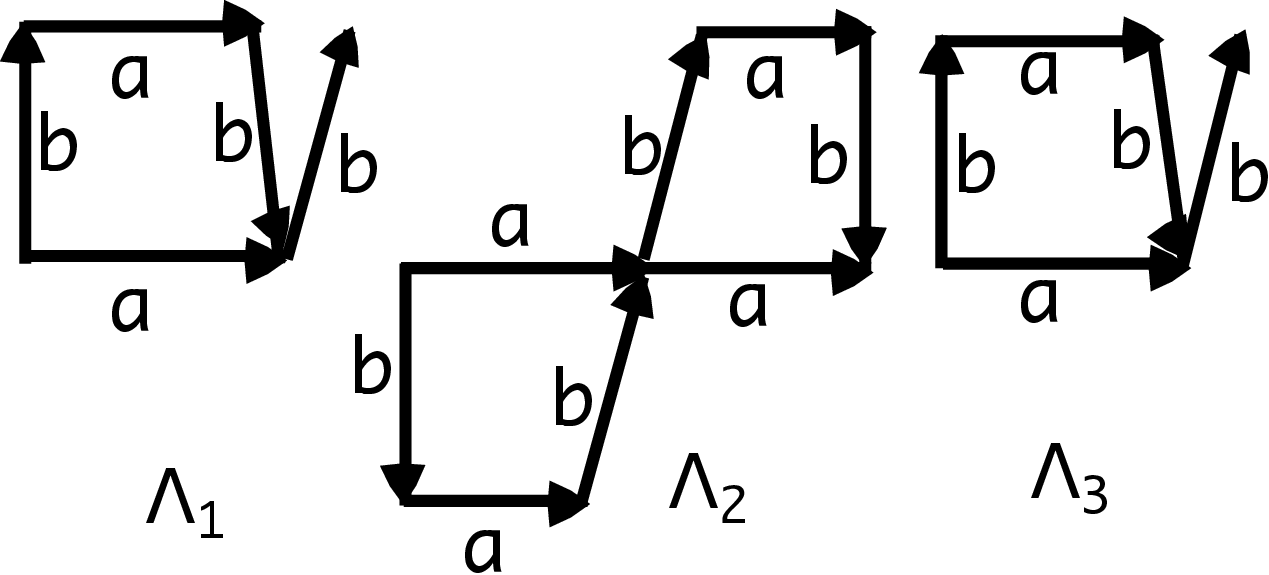}
\caption{$\Lambda_i$ obtained from $\Pi_i$, for $1\leq i\leq 3$, over the presentation $\langle X|a=bab\rangle$}
\label{5b}
\end{figure}

Since $m=p+1>p\geq max\{p_1,p_2, ..., p_q\}$, we obtain $\Delta_i$ from $\Lambda_i$,  by $(m-p_i-1)$ successive applications of  full $\mathscr{P}$-expansions, for $1\leq i\leq q$ (see figure $6$).  

 \begin{figure}[h!]
\centering
\includegraphics[trim = 0mm 0mm 0mm 0mm, clip,width=1.8in]{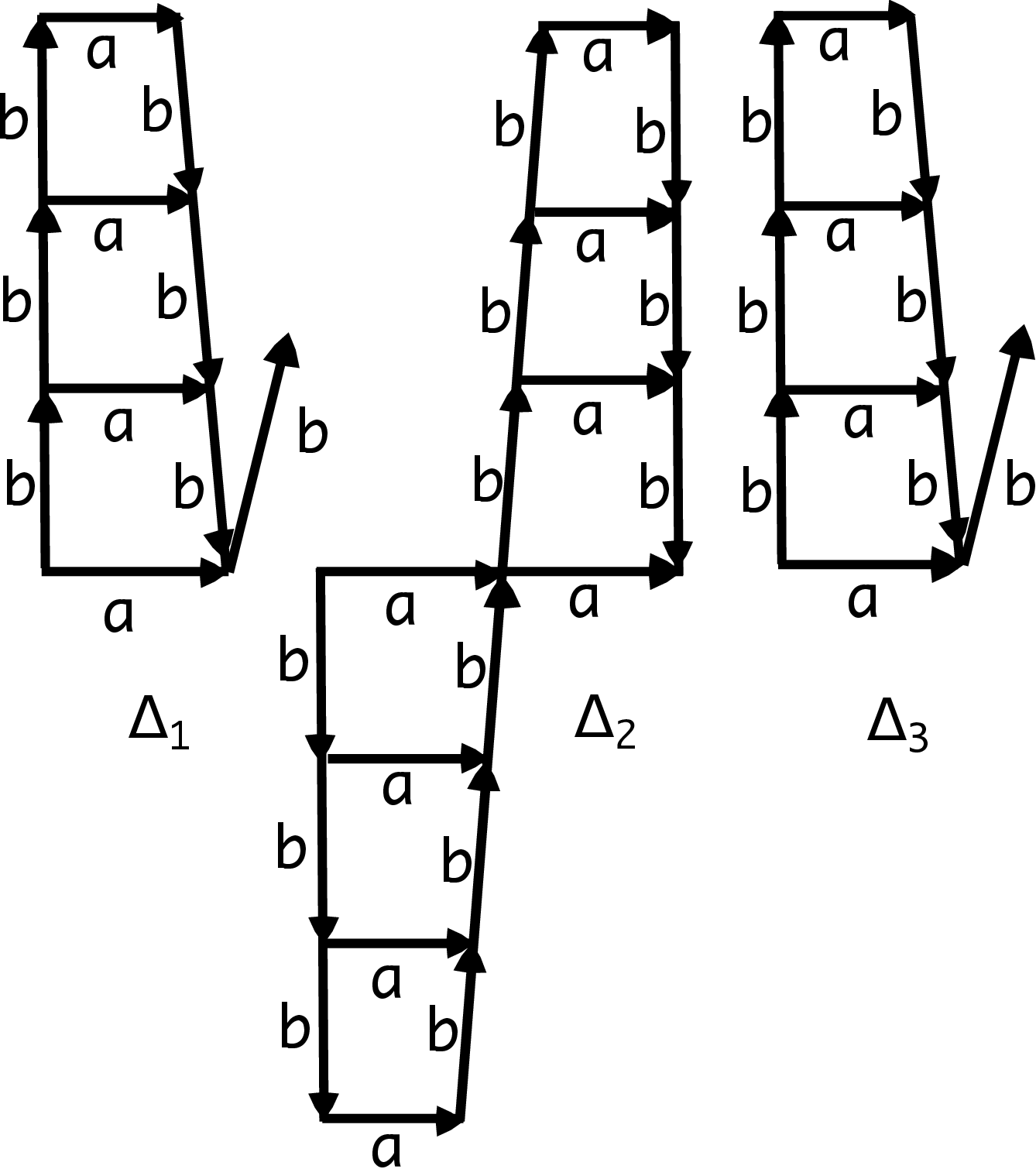}
\caption{$\Delta_i$ obtained from $\Lambda_i$, for $1\leq i\leq 3$, over the presentation $\langle X|a=bab\rangle$}
\label{5b}
\end{figure}

By Lemma \ref{P7}, none of the finite graphs $\Delta_1, \Delta_2, ..., \Delta_q$ contain any unsaturated special vertex relative to the corresponding  $\Pi_i$, where $1\leq i\leq q$. We connect these finite graphs with each other by connecting each $\gamma_i$, for $1\leq i\leq q-1$, at the same vertex of the pivotal transversal of $\Delta_j$, for $1\leq j\leq q$, from where it was removed.  We denote the resulting graph by $\Theta'$. 
 We perform all possible foldings in $\Theta'$ and denote its determinized form by $\Theta$ (see figure $7$). 
 
 \begin{figure}[h!]
\centering
\includegraphics[trim = 0mm 0mm 0mm 0mm, clip,width=1.8in]{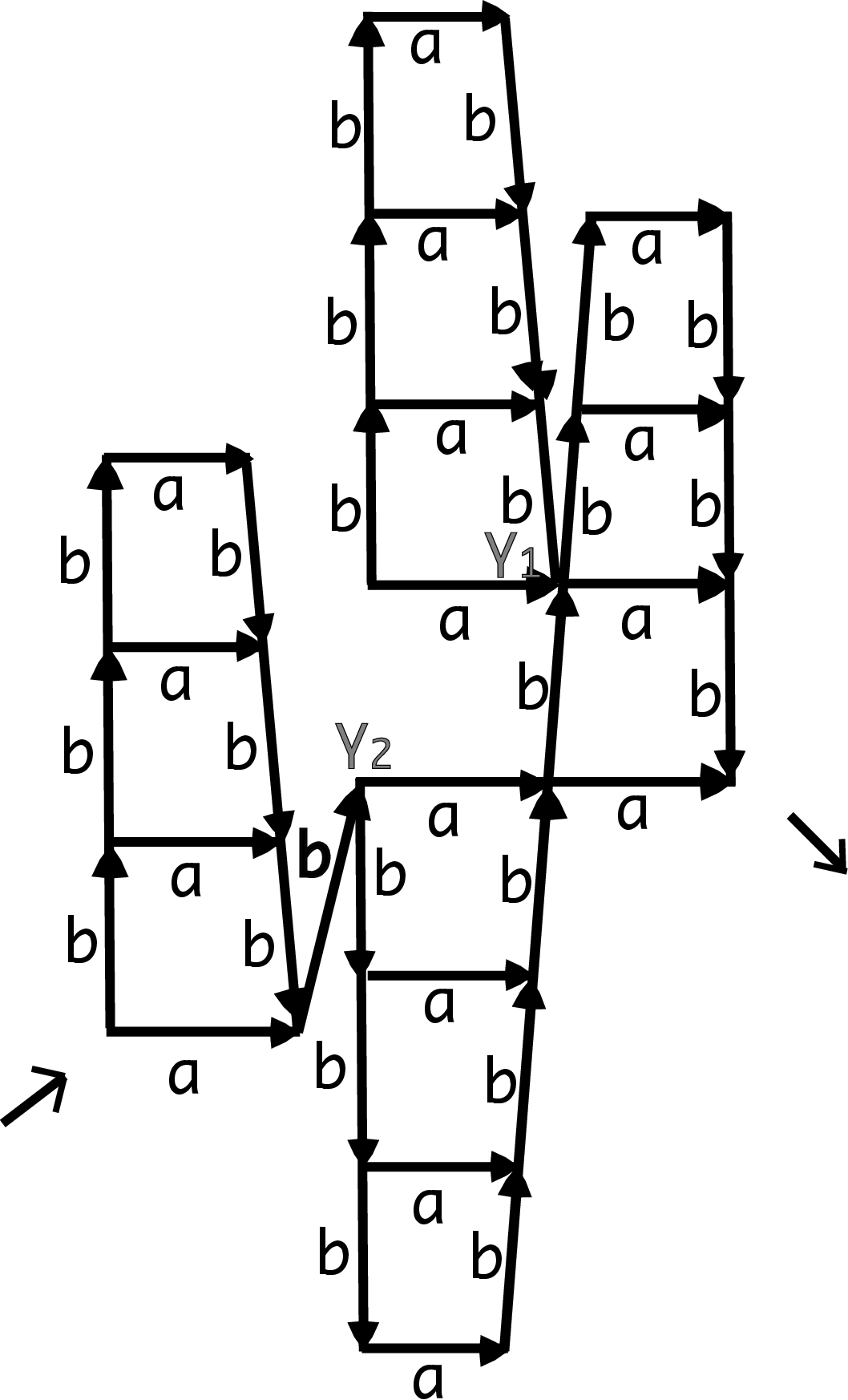}
\caption{$\Theta$ obtained by connecting all $\Delta_j$'s at $\gamma_i$'s and performing all possible foldings}
\label{5b}
\end{figure}

 In $\Theta'$, the out-star set and the in-star set of each cut vertex $\gamma_i$, for $1\leq i\leq q-1$ gets changed. So, this change may give rise to:
 
 \begin{enumerate}
 
 \item some new unsaturated special vertices in $\Theta$ that did not exist in any of the $\Delta_i$'s, 
 
 \item and some new unsaturated segments labeled by the $R$-word $ba^nb$ that pass through some of the $\gamma_i$'s in $\Theta$. 
 
 \end{enumerate}

 \textbf{\textit{Claim:}} The finite graph $\Theta$ does not contain any unsaturated special vertex. 
 
\begin{proof} \textit{Proof of the claim:} We only focus on the connecting segments of $MT(w)$, because these segments contain the cut vertices. 
 
 If neither the initial vertex nor the terminal vertices of a maximal connecting segment between two pivotal transversals of $MT(w)$ are the unsaturated special vertices, then such a segment will not contain any unsaturated special vertex in $\Theta$. Therefore, we can ignore all such connecting segments. 
 
 It is easy to see that if a maximal connecting segment (that is not a proper subsegment of any larger segment of the form $b^r$ in $MT(w)$) is connecting more than two pivotal transversals, then it must pass through an unsaturated special vertex in $MT(w)$. 
 
 We assume that a maximal connecting segment labeled by $b^t$, for some $t\in\mathbb{N}$, contains an unsaturated special vertex  $\delta$, such that $Star^O(\delta)=X$, in $MT(w)$, and $\delta$ is the first such unsaturated special vertex along the orientation of the segment $b^t$.  
 
 The edge labeled by $a$ starting from $\delta$ lies on some $\Pi_i$. The finite graph $\Delta_i$, generated from $\Pi_i$, contains all the first $m$-generation regions of $\Pi_i$. When we connect $\Delta_i$ at $\delta$ in $\Theta'$, the subsegment of $b^t$ from $\delta$ to the terminal vertex of this segment gets identified with a segment of $\Delta_i$. So, all the edges of this subsegment become the boundary edges of some regions of $\Theta$. Hence, in $\Theta$, there is no unsaturated special vertex $\gamma$ left  on the segment $b^t$, such that $Star^O(\gamma)=X$.
 
 Similarly, now we assume that a maximal connecting segment labeled by $b^t$, for some $t\in \mathbb{N}$, contains an unsaturated special vertex $\theta$, such that $Star^I(\theta)=X$, and $\theta$ is the last such unsaturated special vertex along the orientation of the segment $b^t$.
 
 The edge labeled by $a$ ending at $\theta$ lies on some $\Pi_j$. The finite graph $\Delta_j$, generated from $\Pi_j$, contains all the first $m$-generation regions of $\Pi_j$. When we connect $\Delta_j$ at $\theta$ in $\Theta$, the subsegment of $b^t$ from $\theta$ to the initial vertex of this segment gets identified with a segment of $\Delta_j$. So, all the edges of this subsegment become the boundary edges of some regions of $\Theta$. Hence, in $\Theta$, there is no unsaturated special vertex $\gamma$ left  on the segment $b^t$, such that $Star^I(\gamma)=X$.
\end{proof}

It follows from the construction of $\Theta$ that the images of edges labeled by $b$ of $MT(w)$ that could create an unsaturated special vertex in an approximate graph of $S\Gamma(w)$, have been completely utilized in $\Theta$. So, no expansion of $\Theta$ (obtained through an elementary $\mathscr{P}$-expansion and folding) can have an unsaturated special vertex. 
 
 Next, we draw our attention towards  those unsaturated segments labeled by the $R$-word $ba^nb$ that pass through some of the $\gamma_i$'s in $\Theta$. We use the following iterative procedure to obtain a finite determinized graph $\Gamma$ that contains all the first $m$ generation regions of $S\Gamma(w)$, and it does not contain any unsaturated special vertex.

 In $\Theta$, If there are some unsaturated segments labeled by $ba^nb$ that are passing through some of the cut vertices, $\gamma_i$'s, we apply  elementary $\mathscr{P}$-expansions on all such segments, and denote the resulting graph by $\Theta'_1$. We have no reason to believe that $\Theta_1'$ is determinized. So, we perform all possible foldings in $\Theta'_1$ and denote its determinized form by $\Theta_1$. Here, it is worth while to realize that  the $R$-word $a$ consists of only one letter, therefore, it just labels one edge of any approximate graph of $S\Gamma(w)$. So, as a consequence of foldings in $\Theta'_1$ no new edge labeled by $a$ gets created that did not exist before. Hence, if $\Theta_1$ contains an unsaturated segment labeled by an $R$-word, it must be labeled by $ba^nb$. The graph $\Theta_1$, being an expansion of $\Theta$, does not contain any unsaturated special vertex.

 If $\Theta_1$ contains some unsaturated segments labeled by $ba^nb$, we apply elementary $\mathscr{P}$-expansions on all such segments and denote the resulting graph by $\Theta'_2$. We perform all possible foldings in $\Theta'_2$, and denote its determinized form by $\Theta_2$. Just like the previous step, if $\Theta_2$ contains an unsaturated segment labeled by an $R$-word that did not exist in $\Theta_1$, it must be labeled by $ba^nb$. Also, $\Theta_2$, being an expansion of $\Theta$, does not contain any unsaturated special vertex. 
 
  \begin{figure}[h!]
\centering
\includegraphics[trim = 0mm 0mm 0mm 0mm, clip,width=1.8in]{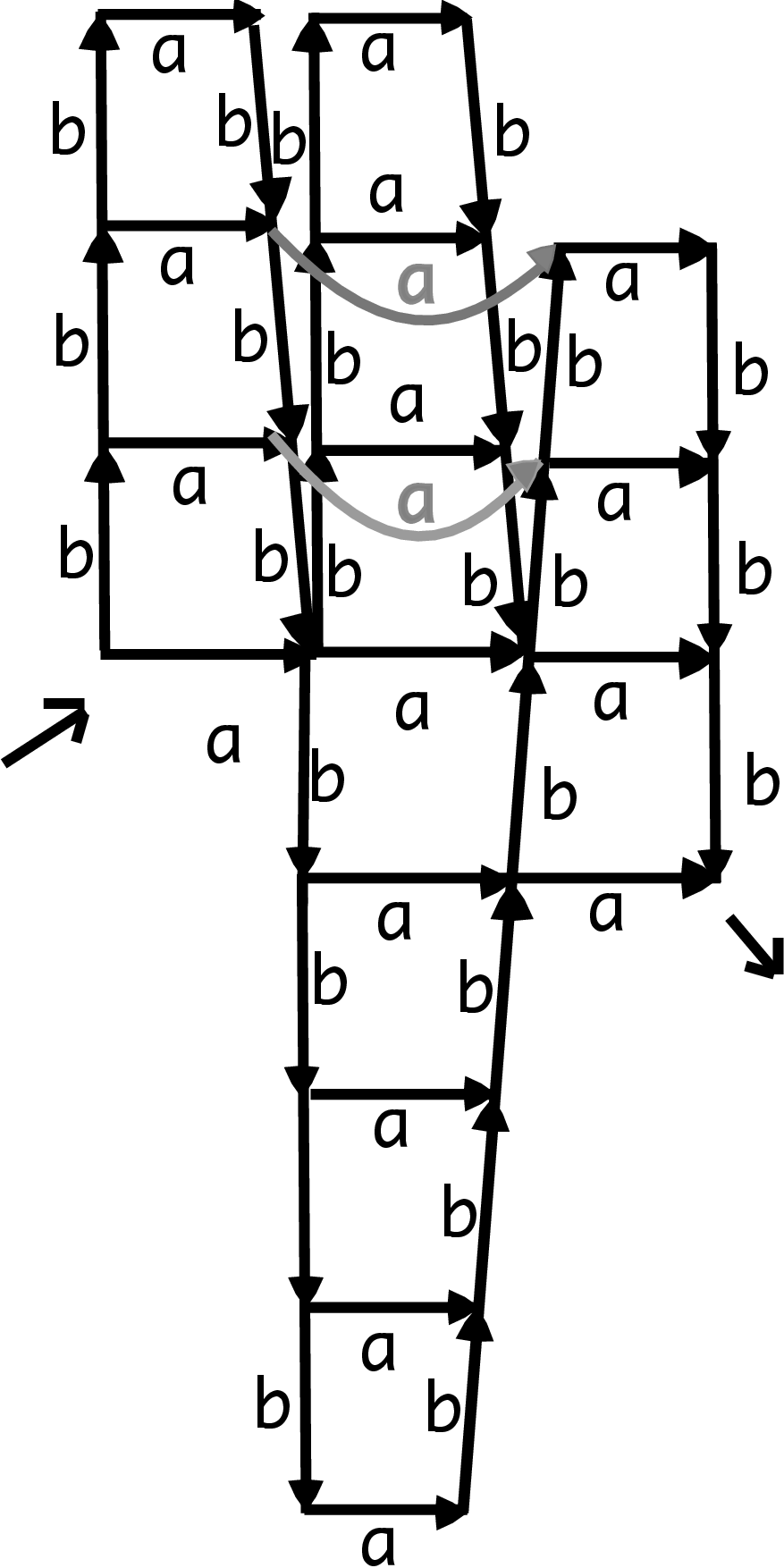}
\caption{$\Gamma$ constrcted from $\Theta$ by using the above iterative process, over the presentation $\langle X|a=bab\rangle$}
\label{5b}
\end{figure}

 We repeat this process at most $m$ times. Finally, we obtain a determinized finite graph that does not contain any unsaturated special vertex, and it contains all the $m$ generation regions of $S\Gamma(w)$. We designate this graph by $\Gamma$ (see figure $8$). Hence, $\Gamma$ is the underlying graph of $(\alpha_m,\Gamma_m(w),\beta_m)$.

\end{proof}

\begin{theorem}\label{T2} For any $w\in (X\cup X^{-1})^*$,  the membership problem for $L(w)$ over the presentation $Inv\langle X|a=ba^nb\rangle$, where $n\geq 1$, is decidable. 
\end{theorem}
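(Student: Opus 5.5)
To decide whether a word $u\in (X\cup X^{-1})^*$ lies in $L(w)$, I would exploit the fact that from stage $m$ onward Stephen's construction grows by pure embeddings. This lets membership be checked in a finite approximate graph of bounded size. Fix $w$ and compute $MT(w)$ effectively by determinizing the linear graph of $w$. From $MT(w)$, read off $p$, the maximum length of a directed segment labeled $b^r$, and set $m=p+1$. By Lemma \ref{L5}, $(\alpha_m,\Gamma_m(w),\beta_m)$ has no unsaturated special vertex, so by Lemmas \ref{P6} and \ref{P7} (and the remarks following them) every later approximate graph $(\alpha_j,\Gamma_j(w),\beta_j)$, $j\geq m$, embeds into the next as an induced birooted subgraph, again without unsaturated special vertices. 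Consequently $S\Gamma(w)$, as the direct limit, contains $(\alpha_m,\Gamma_m(w),\beta_m)$ as a birooted subgraph. Moreover, each later generation only sews new segments $ba^nb$ onto unsaturated edges labeled $a$, with no folding. Each $\Gamma_j(w)$ is finite and computable, since full $\mathscr{P}$-expansion of a finite graph is effective.

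Given $u$, I would decide $u\in L(w)$ by reading $u$ from $\alpha_m$ in a sufficiently large finite stage. The key step is to bound how far out a path labeled $u$ can travel from the core. Because the graphs for $j\ge m$ are obtained by attaching ``hanging'' segments labeled $ba^nb$ without identification, any vertex of $S\Gamma(w)$ at distance $d$ from $\Gamma_m(w)$ first appears by stage roughly $m+d$. Any path of length $|u|$ starting at $\alpha_m$ stays within distance $|u|$ of $\Gamma_m(w)$. So $u\in L(w)$ iff $u$ labels a path from $\alpha_{N}$ to $\beta_{N}$ in $\Gamma_N(w)$ with $N=m+|u|$. I would prove this by showing two things. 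First, the subgraph of $S\Gamma(w)$ within distance $|u|$ of $\alpha$ is already present in $\Gamma_N(w)$. Second, by the embedding property, no later expansion identifies vertices of $\Gamma_N(w)$, so a path absent in $\Gamma_N(w)$ cannot appear later.

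The remaining check is then finite: construct $\Gamma_N(w)$, read $u$ from the start root, and test whether it ends at the end root. By Theorem \ref{Stephen's thm} this decides $u\ge w$ in the natural order. Applying this in both directions decides $u=w$, which gives Theorem \ref{MT3}.

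I expect the main obstacle to be the locality claim: that each generation beyond $m$ increases graph distance from $\Gamma_m(w)$ by at least one, so that nothing new appears near the roots late. Here I would use that new edges sewn at stage $j$ attach only at endpoints of unsaturated $a$-edges created at stage $j-1$. Those endpoints are interior vertices of segments $ba^nb$ sewn at stage $j-1$, hence at distance at least $j-1-m$ from $\Gamma_m(w)$. Lemma \ref{L4} is what guarantees that such attachments create no special vertices and hence no foldings back toward the core.
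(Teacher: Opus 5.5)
Your outline is the paper's proof: the same $m=p+1$, the same appeal to Lemmas \ref{L5}, \ref{P6}, \ref{P7}, and the same cutoff $N=m+|u|$. The paper only asserts that vertices created at stage $m+i$ lie at distance at least $i$ from $\alpha$. Where you go further and try to justify this locality step, your argument rests on a false premise.

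After stage $m$ it is not true that each generation only sews segments $ba^nb$ onto fresh $a$-edges. Nor is $\Gamma_j(w)\to\Gamma_{j+1}(w)$ necessarily an induced embedding. The other direction of the relation, sewing an $a$-edge along an unsaturated path reading $ba^nb$, keeps firing and joins vertices that already exist. Take $n=1$ and $w=aaba$, so $MT(w)$ is $v_0\xrightarrow{a}v_1\xrightarrow{a}v_2\xrightarrow{b}v_3\xrightarrow{a}v_4$, with $p=1$ and $m=2$.
\begin{itemize}
\item Stage $1$ sews, among others, $v_0\xrightarrow{b}A_0\xrightarrow{a}A_1\xrightarrow{b}v_1$ and $v_3\xrightarrow{b}C_0\xrightarrow{a}C_1\xrightarrow{b}v_4$.
\item Since $A_1\xrightarrow{b}v_1\xrightarrow{a}v_2\xrightarrow{b}v_3$ reads $bab$, stage $2$ sews an $a$-edge $A_1\to v_3$. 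It also sews $A_0\xrightarrow{b}D_0\xrightarrow{a}D_1\xrightarrow{b}A_1$ and $C_0\xrightarrow{b}F_0\xrightarrow{a}F_1\xrightarrow{b}C_1$.
\item Now $D_1\xrightarrow{b}A_1\xrightarrow{a}v_3\xrightarrow{b}C_0$ reads $bab$. So stage $3=m+1$ sews an $a$-edge $D_1\to C_0$ between two vertices of $\Gamma_2(w)$.
\item The pattern recurs at every stage. Stage $4$ sews $G_1\to F_0$, where $G_1$ lies on the segment sewn on $D_0D_1$ and $F_0\in\Gamma_2(w)$.
\end{itemize}
No folding ever occurs. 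The three chains of sewn segments joined by these chords form a closed graph, which is therefore $S\Gamma(w)$.

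This breaks several of your steps.
\begin{itemize}
\item Your claim that new edges at stage $j$ attach only at endpoints of unsaturated $a$-edges born at stage $j-1$ fails.
\item Your bound ``distance at least $j-1-m$ from $\Gamma_m(w)$'' fails too: the stage-$(m+2)$ edge $G_1\to F_0$ touches $\Gamma_m(w)$ itself.
\item Lemma \ref{L4} does not rescue this. It only excludes \emph{unsaturated} special vertices, while $D_1$ and $C_0$ do become (saturated) special vertices.
\item Your remark that no later expansion identifies vertices of $\Gamma_N(w)$ handles folding, but not new edges between old vertices.
\end{itemize}

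What is missing is a locality statement about \emph{edges}, not just vertices: every edge created at stage $m+i$ has both endpoints at distance at least $i$ from $MT(w)$, and hence from $\alpha$, in $S\Gamma(w)$. Its inductive proof must cover these chords. An unsaturated $ba^nb$-path in $\Gamma_{j-1}(w)$ has to use an edge born at stage $j-1$, and you must bound how close to $MT(w)$ its endpoints can lie. Distances must be measured from $MT(w)$, not from $\Gamma_m(w)$. In the example, the chord created at stage $m+i$ has endpoints at distances $i$ and $i+1$ from $MT(w)$, so the cutoff $N=m+|u|$ does survive there. The paper's one-line deduction, which concerns only vertices, does not supply this step either.
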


\begin{proof} For a given word $w\in (X\cup X^{-1})^*$, let $m=p+1$, where $p$ is the maximum of the lengths of the directed segments of the form $b^r$, for some $r\in \mathbb{N}$, oriented in any direction, of $MT(w)$. By Lemma \ref{L5}, the approximate graph $(\alpha_m,\Gamma_m(w),\beta_m)$, obtained by $m$ successive applications of full $\mathscr{P}$-expansions on $(\alpha_0, \Gamma_0(w),\beta_0)$, of $S\Gamma(w)$, does not contain any unsaturated special vertex. 
By Lemma \ref{P7}, it follows that for $j\geq m$, none of the approximate graph $(\alpha_j,\Gamma_j(w),\beta_j)$ contains any unsaturated special vertex. So, $(\alpha_i,\Gamma_i(w), \beta_i)$ embeds in $(\alpha_{i+1},\Gamma_{i+1}(w_1), \beta_{i+1})$ for $i\geq m$, by Lemma \ref{P6}.
This leads to the conclusion that for any vertex $\gamma\in V( (\alpha_{m+i},\Gamma_{m+i}(w), \beta_{m+i})\setminus (\alpha_{m+i-1},\Gamma_{m+i-1}(w), \beta_{m+i-1}))$, the distance from $\alpha_{m+i}$ to $\gamma$ is greater than or equal to $i$, for all $i\in \mathbb{N}$. Therefore, for any word $w'\in (X\cup X^{-1})^*$ of length$-k-$say, we can consider the finite bi-rooted approximate graph $(\alpha_{m+k},\Gamma_{m+k}(w), \beta_{m+k})$. If $w'$ labels a path from $\alpha_{m+k}$ to $\beta_{m+k}$ in $ \Gamma_{m+k}(w)$, then $w'\in L(w)$, otherwise $w'\notin L(w)$.

\end{proof}

The following Corollary is an immediate consequence of Theorem \ref{T2}.   

\begin{corollary} The word problem is decidable for an Adian inverse semigroup given by the presentation $Inv\langle a,b|a=ba^nb\rangle$, where $n\geq 1$. 

\end{corollary}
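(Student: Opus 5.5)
The corollary will follow from Theorem \ref{T2} together with Stephen's characterization of equality in Theorem \ref{Stephen's thm}. Given two words $u,v\in (X\cup X^{-1})^*$, part (2) of Theorem \ref{Stephen's thm} says that $u=v$ in $M=Inv\langle a,b\mid a=ba^nb\rangle$ if and only if $u\in L(v)$ and $v\in L(u)$. So the word problem reduces to two instances of the membership problem for Sch\"{u}tzenberger languages. The plan is to show that these two membership tests can be carried out by a uniform algorithm taking $(u,v)$ as input.

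The key steps are as follows.
\begin{enumerate}
\item Compute the Munn trees $MT(u)$ and $MT(v)$. This is effective, since folding a finite linear graph terminates.
\item From each Munn tree, read off the bound $m=p+1$, where $p$ is the maximal length of a directed segment of the form $b^r$ (in either orientation). This is a finite computation.
\item For $v\in L(u)$: with $k=|v|$, build the finite approximate graph $(\alpha_{m+k},\Gamma_{m+k}(u),\beta_{m+k})$ by $m+k$ successive full $\mathscr{P}$-expansions. Each full expansion of a finite graph over the finite presentation sews on finitely many linear graphs and then determinizes, so this is effective.
\item Check whether $v$ labels a path from $\alpha_{m+k}$ to $\beta_{m+k}$. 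This is a finite check in a deterministic finite graph.
\item Do the symmetric test for $u\in L(v)$.
\end{enumerate}

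By the proof of Theorem \ref{T2}, built on Lemmas \ref{L5}, \ref{P6} and \ref{P7}, each outcome correctly decides membership. The reason is that every vertex added beyond stage $m+k$ lies at distance greater than $k$ from the root, so no path of length $k$ can reach it.

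The only point needing care is uniformity. The algorithm must not depend on knowing $S\Gamma(u)$ in advance. That holds here because the stage $m+k$ is computed from the input words alone, and every step above is a finite graph operation. I do not expect a genuine obstacle: the substantive work, namely that stage $m$ has no unsaturated special vertices and that later stages embed, is already contained in Theorem \ref{T2}. The corollary then follows by combining the two membership decisions as in Theorem \ref{Stephen's thm}.
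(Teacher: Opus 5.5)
Your proposal is correct and follows the paper's route: the paper states the corollary as an immediate consequence of Theorem~\ref{T2}, and you make the link explicit by deciding $v\in L(u)$ and $u\in L(v)$ in the finite graphs $\Gamma_{m+k}$, then applying Stephen's criterion that $u=v$ in $M$ exactly when both memberships hold. Your remark that $m$ and $k$ are computed from the input words, so the procedure is uniform, spells out a point the paper leaves implicit.
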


\bibliographystyle{plain}

\end{document}